%
\documentclass[12pt]{amsart}
\usepackage{amssymb,latexsym}
\usepackage{amsrefs}
\usepackage{eucal}
\usepackage{tikz}
\usetikzlibrary{arrows,intersections}

\usetikzlibrary{patterns}
\usepackage{times}


\setlength{\textwidth}{6.4in}
\setlength{\textheight}{8.9in}
\setlength{\hoffset}{-.5in}
\setlength{\voffset}{-.4in}

%

\pagestyle{plain}

\swapnumbers

\theoremstyle{plain}
\newtheorem{blank}{}[section]

\newtheorem{proposition}[blank]{Proposition}
\newtheorem{corollary}[blank]{Corollary}

\newtheorem*{thma}{Theorem A}
\newtheorem*{thmaprime}{Theorem A\textprime}
\newtheorem*{thmb}{Theorem B}

\theoremstyle{definition}
\newtheorem{dblank}[blank]{}

\DeclareMathOperator{\diam}{diam}

\DeclareMathOperator{\card}{card}
\DeclareMathOperator{\Cl}{cl}
\DeclareMathOperator{\dis}{dist}
\DeclareMathOperator{\fsig}{{F}_{\sigma}}

\DeclareMathOperator{\gdelta}{{G}_{\delta}}

\DeclareMathOperator{\Int}{int}

\DeclareMathOperator{\dimcl}{dimcl}

\DeclareMathOperator{\Dim}{Dim}

\newcommand{\etal}{\textit{et al.}}
\newcommand{\ie}{\textit{i.e.}}
\newcommand{\egg}{\textit{e.g.}}
\newcommand{\cf}{\textit{cf.}}

\newcommand{\tx}{\textup}

\newcommand{\abs}[1]{\left\lvert#1\right\rvert}

\newcommand{\set}[1]{\{\, #1\,\}}
\newcommand{\Set}[1]{\left\{\, #1\,\right\}}

\newcommand{\inv}{^{-1}}
\newcommand{\rest}{{\restriction}}
\newcommand{\e}{\epsilon}
\newcommand{\N}{\mathbb N}
\newcommand{\Q}{\mathbb Q}
\newcommand{\R}{\mathbb R}
\newcommand{\Z}{\mathbb Z}
\newcommand{\rbar}{\overline\R}

\newcommand{\rr}{\mathfrak R}

\newcommand{\nbd}{\nobreakdash-\hspace{0pt}}

\newcommand{\Dsig}{\mathrm{D}_{\Sigma}}

\newcommand{\udim}{\overline{\dim}_{\mathrm{M}}}

\newcommand{\I}{\mathbb I}

\newcommand{\hdim}{\dim_{\mathrm{H}}}

\newcommand{\plc}{\,\underline{\phantom{x}}\,}

\DeclareMathOperator{\adim}{Dim}
\DeclareMathOperator{\adimcl}{Dimcl}
\DeclareMathOperator{\tdim}{ind}
\DeclareMathOperator{\tdimcl}{indcl}

\newcommand{\piperp}{\pi^{\perp}}

\DeclareMathOperator{\net}{net}

\hyphenation{non-empty}


\makeatletter
   \def\th@plain{\slshape}
   \makeatother

\providecommand{\nopunct}{\setlength{\spacefactor}{1001}}

\begin{document}

\title
{Metric dimensions and tameness in expansions of the real field}
\author
[P. Hieronymi]
{Philipp Hieronymi}
\address
{Department of Mathematics\\University of Illinois at Urbana-Champaign\\1409 West Green Street\\Urbana, IL 61801}
\email{p@hieronymi.de}
\urladdr{http://www.math.uiuc.edu/\textasciitilde phierony}

\author[C. Miller]{Chris Miller}
\address
{Department of Mathematics\\
The Ohio State University\\
231 West~18th Avenue\\
Columbus, Ohio 43210, USA}
\email{miller@math.osu.edu}
\urladdr{https://people.math.osu.edu/miller.1987}

\begin{abstract}
For first-order expansions of the field of real numbers, nondefinability of the set of natural numbers is equivalent to equality of topological and Assouad dimension on images of closed definable sets under definable continuous maps.
\end{abstract}


\thanks{
\today.
This document should be regarded as preliminary;
some version of it has been submitted for publication.
Comments are welcome.
Miller is the corresponding author.}

\thanks{This work partially supported by:
NSF grant DMS-1300402 and UIUC Campus Research Board awards 13086 and 14194 (Hieronymi);
NSF grant DMS-1001176 (Miller).}

\keywords{Assouad dimension, topological dimension, expansion of the real field, projective hierarchy}

\subjclass[2010]{Primary 03C64; Secondary 03E15, 28A05, 28A75, 54F45, 54H05}

\maketitle

\section{Introduction}

We investigate relations between metric dimensions in real euclidean spaces and definability in expansions of $\rbar:=(\R,+,\cdot,(r)_{r\in\R})$,
the real field with constants for all real numbers.
The intended audience includes both mathematicians and logicians.
The main result is a dichotomy:
Roughly, any given expansion of $\rbar$ either defines all real projective sets or there is a striking agreement of various dimensions on an important subcollection of the definable sets.

For readers familiar with basic first-order logic (FOL, for short), structures in a given language, and expansions thereof, are defined as usual (see, \egg, Marker~\cite{marker}*{1.4.15}).
As the language of $\rbar$ contains a constant for each real number, there is no issue of whether ``definable'' means ``with parameters'' or ``without parameters'' in expansions of $\rbar$.
We identify interdefinable structures.

We ultimately refer readers who are not familiar with FOL to van den Dries and Miller~\cite{geocat} for an essentially FOL-free introduction to definability theory over $\rbar$.
However, we do give here some needed definitions and conventions.
First,
a \textbf{structure} on a set $X$ is
a sequence $\mathfrak S:=(\mathfrak S_m)_{m=1}^\infty$
such that for each $m$:
\begin{enumerate}
\item
$\mathfrak S_m$ is a boolean algebra of subsets of $X^m$ (the $m$\nbd fold cartesian power of $X$).
\item
If $S\in \mathfrak S_m$, then $\{S\times X, X\times S\}\subseteq \mathfrak S_{m+1}$.
\item
If $S\in \mathfrak S_{m+1}$, then the projection of $S$ on the first $m$ coordinates is in $\mathfrak S_m$.
\item
If $1\leq i\leq j\leq m$, then $\set{x\in X: x_i=x_j}\in \mathfrak S_m$.
\end{enumerate}
A \textbf{structure on} $\rbar$ is a structure on $\R$ such that (the graphs of) addition and multiplication belong to $\mathfrak S_3$ and every real singleton $\{r\}$ belongs to $\mathfrak S_1$.
(In the terminology of~\cite{geocat}, a structure on $\rbar$ is a structure on $(\R,+,\cdot\,)$
such that $\mathfrak S_1$ contains every real singleton.)
In this paper, we deal almost exclusively with structures on $\rbar$, but it is often useful to know there is a broader context.

We ask readers not familiar with FOL to take on faith that for every expansion $\rr$ of $\rbar$ in the sense of FOL there is a structure $\mathfrak S$ \tx($:=\mathfrak S(\rr)$\tx) on $\rbar$ such that each $\mathfrak S_m$ consists of the subsets of $\R^m$ that are definable in $\rr$, and
for every structure $\mathfrak S$ on $\rbar$ there is an expansion $\rr$ of $\rbar$ in the sense of FOL such that each $\mathfrak S_m$ consists of the subsets of $\R^m$ that are definable in $\rr$.
Thus, for the purposes of this paper, readers can take ``expansion of $\rbar$'' to mean ``structure on $\rbar$'' and definability to mean belonging to the appropriate $\mathfrak S_m$.
(It is crucial to understand that the notion of definability must always be taken with respect to some given structure.)
We regard maps as set-theoretic objects, so to say that a map $\R^m\supseteq X\to \R^n$ is definable is to say that its graph is definable.

Expansions of $\rbar$ can be partially ordered by setting $\rr\leq \rr'$ if every set definable in $\rr$ is definable in $\rr'$.
There is a maximal element:
Take each $\mathfrak S_m$ to be the power set of $\R^m$.
By definition, $\rbar$ is a minimal element.
The subsets of $\R^m$ definable in $\rbar$ consist of the \textbf{semialgebraic }sets in $\R^m$, that is, finite unions of sets of the form
$$
\set{x\in \R^m: f(x)=0, g_1(x)>0,\dots,g_j(x)>0}
$$
where $f,g_1,\dots,g_j\in\R[x_1,\dots,x_m]$; \cf~\cite{geocat}*{2.5.3}.
We say that $\rr$ and $\rr'$ are \textbf{interdefinable} if $\rr\leq\rr'$ and $\rr'\leq\rr$, equivalently, if $\mathfrak S(\rr)=\mathfrak S(\rr')$.

For any expansion $\rr$ of $\rbar$ and sequence $\mathcal A:=(\mathcal A_m)_{m=1}^\infty$ of collections $\mathcal A_m$ of subsets of $\R^m$ there is a minimal $\rr'\geq \rr$ such that, for every $m$, every $A\in \mathcal A_m$ is definable in~$\rr'$.
Rather than declare notation for this level of generality, we shall use various ways to indicate expansions of $\rr$ by sets of interest.
For example, if $E\subseteq \R^n$, then we let $(\rr,E)$ denote a minimal $\rr'$ that defines $E$ as well as every set definable in $\rr$.

The reader should now be able to apply~\cite{geocat} here as needed (begin with its second section and first two appendices).

What can be said about the lattice (up to interdefinability) of all expansions of $\rbar$?
For reasons that we shall not attempt to explain here, we regard this question as too vague at best and intractable at worst.
However, there are more reasonable versions, as we now begin to explain.

\subsection*{The real projective hierarchy}
Consider $(\rbar,\N)$, the expansion of $\rbar$ by the set of nonnegative integers~$\N$.
Logicians will immediately observe that $\operatorname{Th}(\R,+,\cdot,\N)$ is undecidable.
But much more is true:
$(\rbar,\N)$ defines every real Borel set (see, \egg, Kechris~\cite{kechris}*{(37.6)}), hence also every real projective set in the sense of descriptive theory (\cite{kechris}*{Chapter~V}).
Thus, the definable sets of $(\rbar,\N)$ comprise the real projective hierarchy.
To put this another way, the study of the definable sets of $(\rbar,\N)$ is essentially classical descriptive set theory.
Thus, even set-theoretic independence can arise from seemingly-innocent questions such as whether every set definable in $(\rbar,\N)$ is Lebesgue measurable (Solovay~\cite{solovay}).
Because of such complications, nondefinability of $\N$ is generally regarded as necessary for the definability theory of any particular expansion of $\rbar$ to be analyzable by model-theoretic methods.

The question arises: What can be said about expansions of $\rbar$ that do not define $\N$?
This question underlies the ``tameness program over $\rbar$''; see the introduction of~\cite{tameness} for a more detailed exposition.
The main result of this paper (Theorem~A, below) is one answer, but we need a few more preliminaries for its statement.

Unless indicated otherwise, the variables $j,k,l,m,n,p$ range over $\N$.
Given a set $X$, its power set is denoted by $\mathcal P(X)$ and its $n$\nbd fold cartesian
power by $X^n$, with $X^0$ the one-point set $\{0\}$.
For convenience, we often
identify $X^m\times X^n$ with $X^{m+n}$, and $(X^m)^n$ with $X^{mn}$; in particular,
$X^m\times X^0\cong X^0\times X^m\cong X^{m}$.
By a \textbf{coordinate projection} of $S\subseteq X^n$ we mean the image of $S$ under a map
\begin{equation*}
(x_1,\dots,x_n)\mapsto
\left(x_{\lambda(1)},\dots,x_{\lambda(m)}\right)\colon X^n\to X^m
\end{equation*}
where $0\leq m\leq n$ and $\lambda\colon \{1,\dots,m\}\to \{1,\dots,n\}$ is strictly
increasing.
We usually say just ``projection'' instead of ``coordinate projection''.

If $X$ is regarded as a subset of some topological space,
then we denote its interior by
$\Int X$ and closure by $\Cl X$.
We say that $X$: \textbf{has interior} if $\Int X\neq \emptyset$; \textbf{has no interior} if $\Int X=\emptyset$; and is \textbf{constructible} if it is a boolean combination of open sets.
If $X$ is given a topology, then $X^n$ is always regarded in the product topology.

Given a topological space $X$, we define a sequence of functions
$$
\dim_X:=\bigl (\dim_{X,m}\colon \mathcal P(X^m)\to\N\cup\{-\infty\}\bigr )_{m\in\N}
$$
by letting $\dim_{X,m}A$ (for $A\subseteq X^m$) be the supremum of all $k\in\N$ such that some projection of $A$ on $X^k$ has interior.
Observe that $\dim_{X,m}\emptyset=-\infty$ for all $m\in\N$.
We usually suppress the subscripts, writing just $\dim$ instead of $\dim_{X,m}$.
(Some ambiguity can then arise---\egg, if $X\neq \emptyset$, then
$\dim_{X,2} X^2=2$ and $\dim_{X^2,1}X^2=1$---but intent should always be clear from context.)
We also tend to shorten ``$\dim \Cl$'' to ``$\dimcl$''.
Our use of ``$\dim$'' here conflicts with some of the dimension theory literature
but is consistent with some key model-theoretic sources that we cite ($\dim$ is often useful when dealing with structures on~$X$).
Some properties (proofs are exercises):
\begin{enumerate}
\item
If $A\subseteq X^m$, then $\dim A=m$ if and only if $A$ has interior.
\item
$\dim$ is \textbf{increasing}\tx; \ie\tx, $\dim A\leq \dim B$ for all $m\in\N$ and $A\subseteq B\subseteq X^m$.
\item
$\dim$ is \textbf{stable} on compact sets\tx; \ie,
$\dim (A\cup B)=\max (\dim A,\dim B)$
for all $m\in\N$ and compact $A,B\subseteq X^m$.
\item
$\dim$ is \textbf{logarithmic}, that is, $\dim(A\times B)=\dim A+\dim B$ for all $A\subseteq X^m$ and $B\subseteq X^n$.
\item
If $A\subseteq X$, then $\dim A=0$ if and only if $A$ is nonempty and has no interior.
\item
$\dim$ is stable on constructible subsets of $X$.
\item
If $X$ is a metric space\tx, then $\dim \leq \hdim$, where $\hdim$ denotes Hausdorff dimension.
\end{enumerate}
(Hint for (6): constructible sets either have interior or are nowhere dense.
Hint for (7): $\hdim$ cannot increase under Lipschitz maps.)

\subsection*{Some global assumptions:\nopunct}
\begin{itemize}
\item
we regard $\R^n$ in the usual box topology, but we also employ the usual notation and conventions for working with the extended real numbers $\R\cup\{\pm\infty\}$;
\item
$E$ denotes an arbitrary subset of an arbitrary $\R^n$ unless otherwise indicated (\egg, by explicitly writing ``$n=1$'' or ``$E\subseteq \R$'');
\item
$\rr$ denotes a fixed, but arbitrary, expansion of $\rbar$;
\item
``definable'' means ``definable in $\rr$'' unless indicated otherwise (possibly only by context).
\end{itemize}

Many tameness conditions on $\rr$ imply that every definable subset of $\R$ either has interior or is nowhere dense, a condition that yields many desirable consequences for the definable sets; see~\cite{tameness}.
It would take us too far afield to go into details here, but we do need one basic result now:

\begin{blank}[see~\cite{tameness}*{\S7, Main Lemma}] \label{basicnwd}
The following are equivalent.
\begin{itemize}
\item
Every definable subset of~$\R$ has interior or is nowhere dense.
\item
$\dim=\dimcl$ on all definable subsets of~$\R$.
\item
$\dim=\dimcl$ on all definable sets.
\item
Every definable set either has interior or is nowhere dense.
\end{itemize}
\end{blank}
Thus, the technical dimensional condition  ``$\dim=\dimcl$ on all definable sets'' corresponds to a natural tameness condition on $\rr$,
namely, that each definable set either have interior or be nowhere dense.
Nevertheless, whatever good properties $\dim$ or $\dimcl$ might have, they cannot be regarded as interesting \emph{topological} dimensions because $\dim X=1$ for each fixed nonempty topological space~$X$.
This leads naturally to wondering what can be said about $\rr$ under assumptions on various topological or metric dimensions.

One classical dimension for topological spaces $X$ is the \textbf{small inductive dimension}, $\tdim X$, defined inductively by: $\tdim \emptyset= -\infty$; for $X\neq \emptyset$, $\tdim X$ is
the infimum of all $k$ such that, for every $x \in X$ and open neighborhood $V$ of $x$, there is an open $U$ such $x\in \Cl U\subseteq V$ and the boundary of $U$ (regarded as a topological space via the subspace topology) has $\tdim<k$.
(Often, $\tdim\emptyset$ is defined to be $-1$, but we prefer $-\infty$ for technical reasons.)
The definition is then extended to subsets of $X$ by passing to the subspace topology.
As with $\dim$, we tend to write $\tdimcl$ instead of $\tdim\Cl$.
Although defined for every topological space, $\tdim$ is not particularly well behaved on all topological spaces, nor in this generality does it bear any fixed relation (always $\leq $, always $=$, or always $\geq$) to certain other reasonable notions of topological dimension.
But heuristically, all reasonable notions of topological dimension
coincide on separable metrizable spaces (for more precision, see Engelking~\cite{engelking}, our primary reference for topological dimension theory).
Thus, when working with subsets of $\R^n$, we are justified in regarding $\tdim$ as \emph{the} topological dimension.

The relation between $\tdim$ and $\dim$ can be subtle, even in real $n$\nbd spaces.
While it is easy to see that $\tdim=\dim$ on all subsets of $\R$, we have only $\tdim\leq \dim$ on all subsets of $\R^2$ (with inequality possible, even on compact sets), and each of $\dim <\tdim $, $\dim =\tdim $ and $\dim >\tdim$ occur even among the $\gdelta$ subsets of $\R^3$.
On the other hand, $\tdim\leq \dim$ on all $\fsig$ sets; in particular, $\tdimcl\leq\dimcl$ always.
(See~\ref{dimtdim} for more details.)
It is thus natural to wonder what can be said about $\rr$ if either  $\tdim=\dim$ on all definable sets or $\tdim=\tdimcl$ on all definable sets (\cf~\ref{basicnwd}).
We shall indeed produce some answers here,
but because we are thinking about $\R$ as a metric space, we prefer to bypass this level of generality in favor of ``dimensional coincidences'' of $\dim$ or $\tdim$ with various metric dimensions.
We do have a particular metric dimension in mind, called \textbf{Assouad dimension} by some, but we postpone (to Section~\ref{S:dims}) giving the definition as we think its technical nature would only be distracting at this point; for now, we only list enough of its basic properties over real $n$\nbd spaces to justify our interest.

\begin{dblank}\label{adimfactsintro}
There is a sequence
$
\displaystyle\adim:=\bigl(\adim_m\colon \mathcal P(\R^m)\setminus\{\emptyset\}\to [0,\infty)\bigr)_{m\in\N}
$
such that for all $m\in\N$:
\begin{enumerate}
\item
$\adim_m =m$ on nonempty open subsets of $\R^m$.
\item
$\adim_m$ is increasing.
\item
$\adim_m$ is stable on closed sets.
\item
$\adim_p f(A)=\adim_m A$ for all $A\subseteq \R^m$, $p\in\N$ and bi-Lipschitz maps
$f\colon A\to \R^p$ (\textbf{Lipschitz invariance}).
\item
$\adim_m=\adim_m\Cl$ (\textbf{closure invariance}).
\item
$ \adim_m\geq \udim$ on bounded sets, where $\udim$ denotes upper Minkowski dimension.
\item
$\adim_1 \set{1/k: 0\neq k\in\N}=1$.
\end{enumerate}
As with $\dim$, we tend to suppress the subscripted~$m$.
We shall also have $\adim \emptyset=-\infty$.
\end{dblank}

The reader need not yet know the definition of $\udim$ (also given in Section~\ref{S:dims}); its appearance here is only to lend credence to an empirical heuristic: \emph{If $E\neq \emptyset$ and $\tdim E=\adim E$, then all dimensions commonly encountered in geometric measure theory, fractal geometry and analysis on metric spaces are equal on $E$.}\footnote{Conventions for dealing with dimensions of $\emptyset$ vary in the literature.}
We refer the reader to Luukkainen~\cite{luu} for
history and explanations of technical significance (but there, $\tdim$ is denoted by $\dim$,  $\adim$ by $\dim_{\mathrm A}$, and  $\udim$ by $\overline{\dim}_{\mathrm B}$).
Further information on metric dimension theory can be found in Falconer~\cite{falconerbook}, Mattila~\cite{mattila} and Robinson~\cite{MR2767108} (but again, notation and conventions vary).

\subsubsection*{Remarks}
(a)~We might reasonably demand that the first four properties of~\ref{adimfactsintro} hold in order for a sequence of functions
$
\bigl(\mathcal P(\R^m)\setminus\{\emptyset\}\to [0,\infty)\bigr)_{m\in\N}
$
to be considered as a system of metric dimensions for real euclidean spaces, but preservation under closure (which renders superfluous the phrase ``on closed sets'' in the third property) fails for $\hdim$.
(b)~It is known that $\udim$ satisfies the first five properties on bounded sets, but $\udim\set{1/k: 0\neq k\in\N}=1/2$.
(c)~If $E\neq \emptyset$ and $\dim E=\adim E$, then heuristically, all
\emph{metric} dimensions should agree on $E$. (It is a routine consequence of $\udim\leq \adim$ on bounded sets that $\hdim\leq \adim$, and
recall that $\dim\leq \hdim$.)

We are now ready to state our main result:

\begin{thma}
If $\rr$ does not define $\N$\tx, then $\tdim=\adim=\dim$ on projections of closed definable sets.
\end{thma}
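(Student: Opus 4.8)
The plan is to bracket every relevant $E$ by
\[
\tdim E\leq\dim E\leq\hdim E\leq\adim E
\]
and then to show that nondefinability of $\N$ collapses this to a string of equalities. The first inequality holds because $E$, being the continuous image of a closed (hence $\sigma$\nbd compact) definable set, is a definable $\fsig$ set, and $\tdim\leq\dim$ on all $\fsig$ sets (\ref{dimtdim}); the second is property~(7) of $\dim$; the third follows, as in the remarks after~\ref{adimfactsintro}, by exhausting $E$ with an increasing sequence of balls and using countable stability of $\hdim$, the bound $\hdim\leq\udim\leq\adim$ on bounded sets, and monotonicity of $\adim$. So the theorem reduces to the single inequality $\adim E\leq\tdim E$ for every projection $E$ of a closed definable set, which I would prove contrapositively: if such an $E$ has $\adim E>\tdim E$, then $\rr$ defines $\N$.

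I would organize the contrapositive around three implications, each asserting that a ``dimension gap'' for a closed definable set forces definability of $\N$: (A)~if $A$ is closed definable with $\adim A>\dim A$, then $\rr$ defines $\N$; (B)~if $C$ is closed definable and some projection $\rho$ satisfies $\dim\rho(C)<\dimcl\rho(C)$, then $\rr$ defines $\N$; (C)~if $C$ is closed definable and some projection $\rho$ satisfies $\tdim\rho(C)<\dim\rho(C)$, then $\rr$ defines $\N$. Granting these and assuming $\rr$ does not define $\N$, fix $E=\pi(C)$ with $C$ closed definable: applying (A) to the closed definable set $\Cl E$, together with closure invariance (\ref{adimfactsintro}(5)), gives $\adim E=\adim\Cl E=\dim\Cl E=\dimcl E$; applying (B) to $C,\pi$ gives $\dim E=\dimcl E$, hence $\adim E=\dim E$; applying (C) to $C,\pi$ gives $\tdim E=\dim E$. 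With the free chain this yields $\tdim E=\dim E=\hdim E=\adim E$, which is Theorem~A.

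For (A): for each $N\in\N$ the set
\[
B_N:=\Set{(x,r,R): 0<r\leq R\text{ and }A\cap B(x,R)\text{ is not covered by }N\text{ balls of radius }r}
\]
is definable, the covering clause being first\nbd order in $\rr$, and $\adim A>\dim A=k$ says precisely that, for a fixed $s$ with $k<s<\adim A$ and every $c>0$, some $B_N$ meets $\{(x,r,R):N>c(R/r)^{s}\}$ with $R/r$ --- hence $N$ --- arbitrarily large. The heart of the matter is to convert this scattered supply of ``bad configurations at large scale ratio'' into a single definable nested sequence of such configurations whose scale ratios tend to $\infty$; its combinatorial skeleton is a geometric\nbd progression\nbd like discrete subset of $A$ on which the counting map $y\mapsto\card\{\text{skeleton points}\leq y\}$ is definable, and that recovers a definable copy of $\N$, just as $\set{2^{n}:n\in\N}$ already does. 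For (B) and (C) the defect is produced by a projection while the set $C$ is \emph{closed}: passing to a further projection, one may assume $\rho(C)$ is dense in an open box yet omits a point $x_{0}$ of it, and then closedness of $C$ forces the $\rho$\nbd fibers of $C$ over points of $\rho(C)$ near $x_{0}$ to escape to infinity (a subconvergent sequence would otherwise place $x_{0}$ in $\rho(C)$); slicing this ``escaping fiber over a dense set'' at a sequence of scales produces a nested bad sequence to which the counting argument of (A) applies. For (C) one first invokes Hurewicz's dimension\nbd raising theorems (Engelking~\cite{engelking}) to deduce, from $\tdim\rho(C)<\dim\rho(C)$, that the relevant projection of the closed set $C$ either fails properness (handled as just described) or carries a positive\nbd dimensional family of fibers over a topologically substantial set, which again produces a definable copy of $\N$.

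The main obstacle is the extraction step in~(A): the hypothesis $\adim A>\dim A$ asserts only that covering numbers misbehave \emph{somewhere} at \emph{some} pair of scales, with no uniformity in location or scale, while the ``defines $\N$'' machinery needs a single uniformly definable, self\nbd similar sequence on which to count. Manufacturing that uniformity seems to require feeding the tameness hypothesis back into the argument --- for instance via~\ref{basicnwd} together with a Baire\nbd category or definable\nbd selection step. A secondary obstacle is the bookkeeping in~(C), where the classical dimension\nbd raising theorems must be deployed so that a \emph{definable} projection is forced either to fail properness or to carry a genuinely large family of nontrivial fibers --- neither of which a tame structure can sustain.
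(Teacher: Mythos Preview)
Your reduction to (A), (B), (C) is sound, but you yourself identify the gap and do not close it: in (A) you need to convert the non-uniform supply of bad scale pairs implicit in $\adim A>\dim A$ into a single definable infinite discrete set, and you offer no mechanism for doing so. Each $B_N$ is definable for \emph{fixed} $N$, but the family $(B_N)_{N\in\N}$ is not uniformly definable unless $\N$ already is, so you cannot simply ``read off'' a geometric-progression skeleton; and your proposed counting map $y\mapsto\card\{\text{skeleton points}\leq y\}$ presupposes exactly the definable infinite discrete set whose existence is in question. The sketches for (B) and (C) inherit this problem (you explicitly feed them back into the ``counting argument of (A)''), and the appeal to Hurewicz-type dimension-raising theorems for (C) does not obviously produce \emph{definable} data. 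So as written this is a plausible outline with its central engine missing.

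The paper's route is genuinely different from what you propose. It never tries to extract $\N$ directly from a covering-number defect on a single set. Instead it: (i)~establishes $\tdim=\dim$ on $\Dsig$ via Proposition~\ref{uniform}, whose key input is the black-box theorem from~\cite{hier2} that a definable map from a discrete set with dense image already forces $\N$; (ii)~handles $\adim=\dim$ by first reducing $\adim E$ to the \emph{uniform} upper Minkowski dimension of the definable family $\{(E-x)/t\cap[-1,1]^n:x\in E,\,t>0\}$ of rescalings, and then proving a uniform fiber bound (Theorem~B) by an induction built on a ``sparse'' box-counting calculus. The $\dim=0$ base case is not a scale-extraction argument but a geometric one (the ``Falconer trick''): iterated difference quotients of a set with $\adim>0$ eventually become dense, and that density is what triggers~\cite{hier2}. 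In short, the paper replaces your missing ``self-similar skeleton'' step by (a)~passing from a single set to a definable family of rescalings so that Assouad becomes a uniform Minkowski problem, and (b)~using arithmetic of the set (difference quotients) rather than covering combinatorics to manufacture density.
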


An equivalent local version:

\begin{thmaprime}
If $E$ is closed\tx, $f\colon E\to \R^p$ is continuous\tx, and $(\rbar,f)$ does not define $\N$\tx, then $\tdim f(E)=\adim f(E)=\dim f(E)$.
\end{thmaprime}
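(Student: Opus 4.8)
The plan is to derive Theorem~A\textprime{} by combining the easy chain of inequalities with one substantial implication. First I would observe that a subset of some $\R^p$ is a projection of a closed definable set precisely when it equals $f(E)$ for a closed definable $E$ and a continuous definable $f\colon E\to\R^p$: if $f$ is such, then $\graph f$ is closed (since $E$ is closed and $f$ is continuous) and definable, and $f(E)$ is a projection of $\graph f$; conversely a projection of a closed definable $S$ is the image of $S$ under the continuous restriction of a coordinate projection. Hence Theorems~A and~A\textprime{} are equivalent, and I work with the local form. Put $A:=f(E)$. As $E$ is $\sigma$\nbd compact, so is $A$; in particular $A$ is an $\fsig$ set, so $\tdim A\le\dim A$ by~\ref{dimtdim}, while $\dim A\le\hdim A\le\adim A$ by Remark~(c) following~\ref{adimfactsintro}. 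Thus Theorem~A\textprime{} reduces to the assertion that \emph{if $(\rbar,f)$ does not define $\N$, then $\adim A\le\tdim A$}, which I would prove in contrapositive form.

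So assume $\adim A>\tdim A=:d$; the goal is to extract from this gap a definable object so rigid that $\N$ becomes definable in $(\rbar,f)$. Since $\tdim\le\dim\le\adim$ on $A$, the gap must already be visible either between $\dim$ and $\adim$ or between $\tdim$ and $\dim$; accordingly I would reduce --- with some care, because $\adim$ is only \emph{finitely} stable on closed sets and not countably so --- to one of two model situations. In the first (a ``fractal'' gap), one has a bounded definable $B$, still a projection of a closed definable set, with $\dim B<\adim B$. In the second (a ``Peano'' gap), which arises when $\tdim A<\dim A$, a Baire\nbd category argument produces a compact definable $K\subseteq A$ some coordinate projection $\pi$ of which has interior, so that $\pi\rest K$ is a definable continuous surjection of $K$ onto a set of strictly larger topological dimension.

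For the first situation I would unwind the definition of Assouad dimension: $\adim B>0$ means there is $s>0$ such that for every $C$ there exist $x\in B$ and scales $r<R$ with $B\cap B(x,R)$ not coverable by $C(R/r)^{s}$ balls of radius $r$ --- equivalently, $B\cap B(x,R)$ contains more than $C(R/r)^{s}$ points that are pairwise more than $r$ apart. Such covering and separation statements are first\nbd order over $\rbar$, so letting $C\to\infty$ exhibits, definably in $(\rbar,f)$, a family of finite configurations inside $B$ living at arbitrarily small relative scales $r/R$ and having \emph{polynomially many} (in $R/r$) uniformly separated branches; this polynomial branching is exactly what distinguishes $B$ from tame discrete sets such as $2^{\Z}$, whose configurations grow only logarithmically. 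The hard part, and the main obstacle, is the final step: converting such a definable family into an outright definition of $\N$. Here I would invoke a suitable form of the criteria from the tameness program for when an expansion of $\rbar$ interprets the projective hierarchy --- in the spirit of work of Hieronymi and Tychonievich and of the results in~\cite{tameness} --- to the effect that a definable, self\nbd refining family of finite subsets of a bounded interval whose cardinalities grow polynomially through the scales already encodes the successor operation on $\N$. The subtlety is that an Assouad gap need not yield affine (arithmetic\nbd progression\nbd like) configurations --- the middle\nbd thirds Cantor set has positive Assouad dimension but contains no three\nbd term arithmetic progression --- so the configurations one gets are essentially ultrametric, and the $\N$\nbd definability criterion must be applied to the nested\nbd scales data carried by them rather than to arithmetic gaps; checking that this data is simultaneously definable and rigid enough to recover $\N$ is where the real work lies. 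The ``Peano'' situation is handled within the same circle of ideas, a definable continuous map that strictly raises topological dimension being itself a ``wild'' object in that sense. Combining the two situations gives $\adim A\le\tdim A$, and with the inequalities of the first paragraph this yields $\tdim A=\dim A=\adim A$.
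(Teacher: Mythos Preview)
Your reduction in the first paragraph is correct and matches the paper's observation that Theorems~A and~A\textprime{} are equivalent; the chain $\tdim A\le\dim A\le\adim A$ for $\fsig$ sets is also fine.

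The remainder, however, is not a proof: you correctly locate the hard step and then do not carry it out. The phrase ``I would invoke a suitable form of the criteria from the tameness program'' is the entire content of your argument for the fractal gap, and you yourself write that ``checking that this data is simultaneously definable and rigid enough to recover $\N$ is where the real work lies.'' No off-the-shelf criterion of the kind you describe exists; the polynomially-branching-configuration heuristic does not by itself produce a definition of $\N$, and neither~\cite{tameness} nor the Hieronymi--Tychonievich results supply one in this form. Your treatment of the ``Peano'' gap is equally incomplete: asserting that a dimension-raising continuous definable map is ``wild'' is not the same as exhibiting $\N$.

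The paper's argument is structurally quite different and does not try to manufacture $\N$ directly from an Assouad gap. It proceeds in three steps. First, the only external tameness input is the specific fact (from~\cite{hier2}) that if $\rr$ does not define $\N$ then definable images of cartesian powers of a discrete set are nowhere dense; this is leveraged into a uniform rectangular-partition lemma (\ref{uniform}) which already yields $\tdim=\dim$ on $\Dsig$. Second, the Assouad dimension of $E$ is rewritten as the upper Minkowski dimension of the \emph{definable family} of all rescaled local pictures $\{\,[-1,1]^n\cap (E-x)/t : x\in E,\ t>0\,\}$, which is the family of fibers of a single bounded $\Dsig$ set. Third, Theorem~B bounds $\udim$ of such fiber families by the common $\dim$ of the fibers, via an induction on the fiber dimension built around a combinatorial notion of ``sparse'' families of subsets of $\N^n$; combined with the second step this gives $\adim E\le\dim E$. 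The idea you are missing is precisely this reduction of $\adim$ to $\udim$ of a definable family: it converts a multi-scale metric condition into a box-counting problem about a single definable object, to which~\ref{uniform} and its consequences can be applied.
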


(By basic definability, the conclusion of
Theorem~A is equivalent to $\tdim=\adim=\dim$ on images of closed definable sets under definable continuous maps.)
Heuristically:
\emph{In order for the definability theory of $\rr$ to be amenable to model-theoretic methods, we must have $\tdim=\adim=\dim$ at least on all images of closed definable sets under definable continuous maps.}

It is easy to see that the converse of Theorem~A
holds, indeed,
if either $\adim=0$ on all countable compact definable subsets of $\R$
or $\tdim\geq \dim$ on all compact definable subsets of $\R^2$, then $\rr$ does not define $\N$.
For the former, recall~\ref{adimfactsintro}.5 and~\ref{adimfactsintro}.7; for the latter, see~\ref{rotatecantor} below.
We now collect a few easy applications of Theorem~A to the tameness program.

\begin{corollary}\label{Efractal}
If $\tdim E\neq \adim E$\tx, then
either $\tdim E\neq \tdimcl E$ or $(\rbar,E)$ defines $\N$.
\end{corollary}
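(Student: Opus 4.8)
The plan is to prove the contrapositive-style dichotomy by assuming both alternatives fail and deriving a contradiction. So suppose that $\tdim E=\tdimcl E$ and that $(\rbar,E)$ does not define $\N$. All of the work will take place in the expansion $\rr':=(\rbar,E)$, to which Theorem~A applies since $\rr'$ does not define $\N$.

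First I would note that $\Cl E$ is definable in $\rr'$. This is the one point requiring a (routine) check: closure is a first-order operation over $\rbar$, since $x\in\Cl E$ is expressed by $\forall\e>0\,\exists y\in E\;\norm{x-y}<\e$, and the order and the euclidean metric are semialgebraic, hence already definable in $\rbar$; in terms of the structure operations this is a matter of taking complements, intersections, and projections of $E$. Thus $\Cl E$ is a closed definable set of $\rr'$, and in particular it is a projection of a closed definable set (take the identity coordinate projection).

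Next I would invoke Theorem~A for $\rr'$: since $\rr'$ does not define $\N$, we have $\tdim=\adim=\dim$ on projections of closed definable sets, so in particular $\tdim\Cl E=\adim\Cl E$. Combining this with closure invariance of Assouad dimension (\ref{adimfactsintro}.5), namely $\adim E=\adim\Cl E$, and with the definition $\tdimcl E=\tdim\Cl E$ together with the standing hypothesis $\tdim E=\tdimcl E$, I get the chain
\[
\adim E=\adim\Cl E=\tdim\Cl E=\tdimcl E=\tdim E,
\]
contradicting $\tdim E\neq\adim E$. Hence at least one of $\tdim E\neq\tdimcl E$ or ``$(\rbar,E)$ defines $\N$'' must hold.

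There is essentially no hard step here: the corollary is a direct consequence of Theorem~A applied to $(\rbar,E)$ once one observes that $\Cl E$ is definable there, with closure invariance of $\adim$ doing the remaining bookkeeping. The only thing to be careful about is that Theorem~A is being applied to the particular expansion $(\rbar,E)$ rather than to the ambient $\rr$ of the global assumptions, which is legitimate precisely because Theorem~A is stated for an arbitrary expansion of $\rbar$ not defining $\N$.
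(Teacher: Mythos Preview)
Your proof is correct and follows essentially the same route as the paper: assume $(\rbar,E)$ does not define $\N$, apply Theorem~A to the closed definable set $\Cl E$ to get $\tdim\Cl E=\adim\Cl E$, and then use closure invariance of $\adim$ to conclude $\tdimcl E=\adim E$. The only cosmetic difference is that you frame it as a contradiction from assuming both alternatives fail, whereas the paper simply shows that failure of the second alternative forces the first.
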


\begin{proof}
If $(\rbar,E)$ does not define $\N$, then $\tdim\Cl E=\adim\Cl E$ by Theorem~A. Recall that $\adim=\adimcl$.
\end{proof}

Heuristically: \emph{If any of the commonly-encountered metric dimensions fail to coincide on $E$, it is only because $E$ is \emph{topologically} noisy or it encodes enough information to define \textup(over $\rbar$\textup) all real projective sets.}

We can add another condition to~\ref{basicnwd}:

\begin{corollary}\label{dimadimnwd}
Every definable subset of $\R$ has interior or is nowhere dense if and only if
$\dim=\adim$ on all definable sets.
\end{corollary}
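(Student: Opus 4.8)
The plan is to obtain both implications from the characterization~\ref{basicnwd}, from Theorem~A, and from the closure invariance $\adim=\adim\Cl$ of~\ref{adimfactsintro}.5; no new geometry is required. For orientation one may recall $\dim\leq\hdim\leq\adim$ on arbitrary subsets of euclidean space (property~(7) of $\dim$ together with the remark $\hdim\leq\adim$), although the argument below does not actually need this inequality.

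For the forward implication, suppose every definable subset of $\R$ has interior or is nowhere dense. First I would note that $\rr$ cannot define $\N$: if it did, it would define $\Q$, which is dense in $\R$ but not open, hence a definable subset of $\R$ that neither has interior nor is nowhere dense --- contradiction. So Theorem~A is available. Now let $A$ be an arbitrary definable set. Then $\Cl A$ is a closed definable set, so Theorem~A gives $\adim\Cl A=\dim\Cl A$; moreover $\adim A=\adim\Cl A$ by closure invariance, and $\dim\Cl A=\dimcl A=\dim A$ by~\ref{basicnwd} under our hypothesis. Chaining these equalities yields $\adim A=\dim A$.

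The converse is shorter and uses only closure invariance: assuming $\dim=\adim$ on all definable sets, for any definable $A$ we apply the hypothesis to $A$ and to $\Cl A$ and combine with $\adim A=\adim\Cl A$ to get $\dim A=\adim A=\adim\Cl A=\dim\Cl A=\dimcl A$, so $\dim=\dimcl$ on all definable sets and~\ref{basicnwd} finishes. The one substantive ingredient is Theorem~A, used only in the forward implication; I foresee no real difficulty beyond correctly moving between the equivalent conditions bundled in~\ref{basicnwd} and the trivial reduction to closures, and the empty set is harmless since $\dim\emptyset=\adim\emptyset=-\infty$.
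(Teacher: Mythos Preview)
Your proof is correct and follows essentially the same route as the paper's: the forward direction rules out definability of $\N$ via $\Q$, applies Theorem~A to closures, then uses closure invariance of $\adim$ together with~\ref{basicnwd}; the converse uses closure invariance of $\adim$ to get $\dim=\dimcl$ and then~\ref{basicnwd}. The paper's proof is more terse (the converse is stated as ``immediate from $\adimcl=\adim$''), but the logical content is identical.
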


\begin{proof}
Suppose that every definable subset of $\R$ has interior or is nowhere dense.
Then $\rr$ does not define $\Q$, hence neither $\N$, so
$\dimcl=\adimcl=\adim$ on all definable sets.
By~\ref{basicnwd}, we also have $\dimcl=\dim$ on all definable sets.
Hence, $\dim=\adim$ on all definable sets.

The converse is immediate from $\adimcl=\adim$.
\end{proof}

We can now address some issues raised earlier:

\begin{corollary}
\label{tdimtdimclthm}
The following are equivalent.
\begin{enumerate}
\item
$\tdim=\adim$ on all definable sets.
\item
$\tdim=\tdimcl$ on all definable sets.
\item
$\tdim=\dim=\adim$ on all definable sets.
\item
$\tdim=\dim$ on all definable sets.
\item
$\tdim\geq \dim$ on all definable sets.
\end{enumerate}
\end{corollary}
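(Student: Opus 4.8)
The plan is to prove all five statements equivalent by showing that (3) implies each of (1), (2), (4), (5)---which is essentially formal---and that, conversely, each of (1), (2), (4), (5) implies (3); every reverse implication will be routed through the condition, call it (NWD), that \emph{every definable subset of~$\R$ has interior or is nowhere dense}, which by~\ref{basicnwd} is the same as ``$\dim=\dimcl$ on all definable sets''. That (3) implies (1), (4) and (5) is trivial, and (3) implies (2) because, for definable $E$, closure invariance of $\adim$ and definability of $\Cl E$ give $\tdimcl E=\tdim\Cl E=\dim\Cl E=\adim\Cl E=\adim E=\tdim E$ upon applying (3) to $\Cl E$ and to $E$.

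The first real step is to show that each of (1), (2), (5) implies (NWD); together with (3)$\Rightarrow$(1) and (4)$\Rightarrow$(5) this handles all five. So suppose (NWD) fails. By~\ref{basicnwd} there is a definable $A\subseteq\R$ with empty interior that is somewhere dense; put $J:=\Int\Cl A$, a nonempty definable open set, and replace $A$ by $A\cap J$, so that now $A\subseteq J$ is definable, dense and codense in $J$. Since $\tdim=\dim$ on subsets of $\R$, we have $\tdim A=\dim A=0$ while $\tdim\Cl A=\dim\Cl A=1$ (as $\Cl A\supseteq J$), which contradicts (2); and by closure invariance and monotonicity of $\adim$ together with~\ref{adimfactsintro}.1, $\adim A=\adim\Cl A\geq\adim J=1\neq 0=\tdim A$, which contradicts (1). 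For (5), let $g\colon J\to\R$ be the definable characteristic function of $J\setminus A$ and set $E:=\graph(g)\subseteq\R^2$. Then $E$ is definable; its projection to the first coordinate is $J$, which has interior, and $E$ has empty interior in $\R^2$, so $\dim E=1$; on the other hand $E=(A\times\{0\})\sqcup((J\setminus A)\times\{1\})$ is a disjoint union of two sets each of which is open and closed in $E$ and homeomorphic to a subset of $\R$ with empty interior, hence of topological dimension $0$, so $\tdim E=0$---contradicting (5).

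The second real step is to show that (NWD) together with any one of (1), (2), (4), (5) gives (3). Assuming (NWD), $\rr$ does not define $\N$ (otherwise it defines $\Q$, in violation of (NWD)), so Theorem~A is available; applied to $\Cl E$---a closed definable set, hence the trivial projection of a closed definable set---it yields $\tdim\Cl E=\adim\Cl E=\dim\Cl E$ for every definable $E$. Combining this with $\adim E=\adim\Cl E$ (closure invariance) and $\dim E=\dim\Cl E$ (from~\ref{basicnwd}) gives $\adim E=\dim E=\tdim\Cl E=\tdimcl E$ for all definable $E$, whence also $\tdim E\leq\tdimcl E=\dim E$ by monotonicity of $\tdim$ on subsets of $\R^n$. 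Now (1) forces $\tdim E=\adim E=\dim E$, (4) forces $\tdim E=\dim E$, (2) forces $\tdim E=\tdimcl E=\dim E$, and (5) forces $\dim E\leq\tdim E\leq\dim E$; in every case $\tdim E=\dim E=\adim E$, which is (3).

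I do not anticipate a genuine obstacle: Theorem~A and~\ref{basicnwd} carry the load, and the argument above never needs the (harder) standalone assertion that (NWD) alone forces $\tdim\geq\dim$ on definable sets---that inequality is supplied each time by the auxiliary hypothesis (1), (2), (4) or (5). The points that will need a little care are invoking Theorem~A in precisely the form that applies to a closed definable set (as the identity projection of itself) and keeping the $-\infty$ conventions for $\emptyset$ straight; among the three witnesses used to refute (NWD), the graph $E$ for (5) is the only one that is not immediate, the crux being that $g$ is total on $J$ yet attains each of its two values on a set of topological dimension $0$.
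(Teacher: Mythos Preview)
Your proof is correct and uses the same ingredients as the paper's proof: Theorem~A applied to closures, closure invariance of $\adim$, the equivalences of~\ref{basicnwd}, and the characteristic-function trick to contradict~(5). The only difference is organizational: the paper proves a cycle $(1)\Rightarrow(2)\Rightarrow(3)\Rightarrow(4)\Rightarrow(5)\Rightarrow(1)$ with three nontrivial links (and cites~\ref{dimadimnwd} explicitly in $(5)\Rightarrow(1)$), whereas you run a hub-and-spoke through~(3) with (NWD) as an explicit intermediate hypothesis, essentially re-deriving~\ref{dimadimnwd} inline; neither arrangement buys anything the other does not.
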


\begin{proof}
(1)$\Rightarrow$(2) is immediate from $\adimcl=\adim$.

(2)$\Rightarrow$(3).
If $\tdim=\tdimcl$ on all definable sets, then $\rr$ does not define~$\Q$, hence neither~$\N$, so
$\tdimcl=\adimcl=\dimcl$ on all definable sets by Theorem~A.
Thus,
$\tdim=\adim=\dimcl$ on all definable sets.
Since $\tdim=\dim$ on all subsets of $\R$, we also have $\dim=\dimcl$ on all definable sets by~\ref{basicnwd}, and so $\tdim=\adim=\dim$ on all definable sets.

(5)$\Rightarrow$(1).
Assume that $\tdim\geq \dim$ on all definable sets.
As mentioned earlier, $\tdimcl\leq \dimcl$ always,
so $\dim\leq \tdim\leq \dimcl$ on all definable sets.
Hence, it suffices by~\ref{basicnwd} and~\ref{dimadimnwd}
to show that every definable subset of $\R$ has interior or is nowhere dense; if
not, there would be a definable subset of $\R$ whose characteristic function has $\tdim=0$ and $\dim=1$.
\end{proof}

We do not know whether the above are also equivalent to
$\dimcl=\dim$ on all definable sets,
but we do have a partial answer:

%
%

\begin{corollary}\label{nocantorintro}
If $\rr$ defines no Cantor subsets of $\R$ and $\dim=\dimcl$ on all definable subsets of $\R$\tx,
then $\tdim=\adim=\dim$ on all definable sets.
\end{corollary}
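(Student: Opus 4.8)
The plan is to reduce the statement, using Theorem~A and the elementary properties of $\tdim$, $\adim$ and $\dim$, to a claim about compact subsets of definable sets, and then to use the Cantor-freeness hypothesis to remove the one remaining obstruction.

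The first observation is that the hypothesis ``$\rr$ defines no Cantor subset of $\R$'' already forces $\rr$ not to define $\N$: the standard middle-thirds Cantor set is closed, hence Borel, hence definable in $(\rbar,\N)$ (see~\cite{kechris}*{(37.6)}), so if $\N$ were definable so would be a Cantor subset of~$\R$. Hence Theorem~A is available; in particular $\tdim L=\adim L=\dim L$ for every closed definable $L$, since a closed set is a projection of itself. Next, by the hypothesis $\dim=\dimcl$ on definable subsets of $\R$ together with~\ref{basicnwd} we get $\dim=\dimcl$ on \emph{all} definable sets. Consequently, for every definable $E$: closure invariance of $\adim$ (\ref{adimfactsintro}.5) and Theorem~A applied to $\Cl E$ give $\adim E=\adim\Cl E=\dim\Cl E=\dim E$; and monotonicity of $\tdim$ together with Theorem~A applied to $\Cl E$ give $\tdim E\le\tdim\Cl E=\dim\Cl E=\dim E$. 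Thus $\adim=\dim$ and $\tdim\le\dim$ on all definable sets, and by Corollary~\ref{tdimtdimclthm} it remains only to prove the reverse inequality $\tdim\ge\dim$ on all definable sets.

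By monotonicity of $\tdim$ and the equality $\tdim L=\dim L$ valid for all closed definable~$L$, that inequality follows once we show that every definable $E$ contains a compact definable $L$ with $\dim L=\dim E$: indeed, then $\tdim E\ge\tdim L=\dim L=\dim E$. I would prove this by induction on $k:=\dim E$, the cases $k\le 0$ being trivial. Fixing a coordinate projection $\pi$ with $\Int\pi(E)\ne\emptyset$ and an open box $U\subseteq\pi(E)$, and replacing $E$ by $E\cap\pi^{-1}(U)$, we may assume $\pi(E)=U$ with $\dim E$ still $k$. If the ambient space of $E$ has dimension $k$ we are done, since then $E$ has interior and contains a closed box of dimension $k$; otherwise $\Int E=\emptyset$, so $E$ is nowhere dense by~\ref{basicnwd}, and after applying the definable fiberwise homeomorphism $(x,y)\mapsto(x,y/(1+|y|))$ we may also assume $E$ bounded. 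Then $\Cl E$ is a compact definable set of dimension $k$. If some compact definable subset of $E$ has dimension $k$, we are done. Otherwise the $k$-dimensional part of $\Cl E$ is concentrated in the definable remainder $\Cl E\setminus E$; here I would argue that the hypothesis $\dim=\dimcl$, applied to $\Cl E\setminus E$ and to the fibers of $\pi$ over $U$, forces that remainder to be Cantor-like rather than $\Q$-like, and then extract from it a definable Cantor subset of~$\R$, contradicting the Cantor-freeness hypothesis. The model for the obstruction is a configuration of the form $\{(x,y,z)\in\R^3:x,y\in D,\ z=x+y\}$ with $D$ a Cantor set: there $\tdim=0<1=\dim$ precisely because $D+D$ has interior, and the witnessing interval makes $D$ definable.

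The main obstacle is this last step: producing a genuinely definable Cantor set (compact, perfect, totally disconnected) out of the dimension defect, while correctly using $\dim=\dimcl$ to exclude the $\Q$-like alternative. A plausible variant that bypasses compact subsets is to construct directly a definable continuous section of $\pi\restriction E$ over some sub-box $W\subseteq U$; its graph is then homeomorphic to~$W$, yielding $\tdim E\ge\tdim W=k$, and the obstruction to the existence of such a section is once again a Cantor configuration, which the hypothesis excludes.
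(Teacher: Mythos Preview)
Your reduction is correct and matches the paper's line exactly: both argue that it suffices to prove $\tdim\ge\dim$ on all definable sets and then invoke Corollary~\ref{tdimtdimclthm}. The gap is entirely in establishing that inequality, and you flag it yourself.

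The compact-subset route does not close. From ``no compact definable $L\subseteq E$ has $\dim L=k$'' you cannot conclude that the $k$\nbd dimensional content of $\Cl E$ sits in $\Cl E\setminus E$: since $\dim E=k$ already, stability of $\dim$ gives no information about $\dim(\Cl E\setminus E)$, and the subsequent extraction of a definable Cantor subset of $\R$ is only gestured at. Your ``plausible variant''---a definable continuous section of $\pi\rest E$ over some sub-box---is precisely what the paper does, but the paper executes it rather than proposes it. The missing move is to recast the hypotheses: ``$\rr$ defines no Cantor subset of $\R$'' together with ``every definable subset of $\R$ has interior or is nowhere dense'' is equivalent (an exercise) to ``every nonempty definable subset of $\R$ has interior or an isolated point.'' Under that reformulation the paper cites~\cite{tameness}*{3.4}, which produces, after permuting coordinates, an open box $U\subseteq\R^{\dim E}$ and a continuous map $U\to\R^{n-\dim E}$ whose graph is contained in~$E$; then $\tdim E\ge\tdim U=\dim E$ immediately. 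So what you are lacking is not a new construction but recognition of the isolated-point reformulation and the existing black box that delivers the section.
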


(By ``Cantor'' we mean nonempty, compact, totally disconnected, and no isolated points.)

\begin{proof}
It is an exercise that the assumption is equivalent to: Every nonempty definable subset of $\R$ has interior or an isolated point.
Suppose that $E$ is definable.
By~\cite{tameness}*{3.4}, after permutation of coordinates there is a box $U\subseteq \R^{\dim E}$ and a continuous map $U\to \R^{n-\dim E}$ whose graph is contained in $E$.
Hence, $\tdim E\geq \dim E$.
As $E$ is arbitrary, we have $\tdim=\adim=\dim$ on all definable sets by~\ref{tdimtdimclthm}.
\end{proof}

There are expansions of $\rbar$ that define Cantor sets and every definable subset of $\R$ has interior or is nowhere dense; see Friedman \etal~\cite{fkms}, and~\cite{tamecantor}.
We do not know whether any of them satisfy $\dim\leq \tdim$ on all definable sets.

It is immediate from Theorem~A that if every definable set is a projection of a closed definable set, then
$\tdim=\adim=\dim$ on all definable sets.
We can do better:

\begin{corollary}\label{fsigintro}
If every definable subset of $\R$ is $\fsig$\tx, then $\tdim=\adim=\dim$ on all definable sets.
\end{corollary}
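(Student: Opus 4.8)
The plan is to deduce, from the hypothesis that every definable subset of $\R$ is $\fsig$, the two hypotheses of Corollary~\ref{nocantorintro}---that $\dim=\dimcl$ on all definable subsets of $\R$, and that $\rr$ defines no Cantor subset of $\R$---and then to quote that corollary.

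For the first, by~\ref{basicnwd} it is enough to show that every definable $A\subseteq\R$ has interior or is nowhere dense. If not, then $A$ has empty interior while $\Int\Cl A\neq\emptyset$; fix a nonempty open interval $I$ with rational endpoints inside $\Int\Cl A$, so that $I$ is definable. Since $I\subseteq\Cl A$ and $\Int A=\emptyset$, both $A\cap I$ and $I\setminus A$ are definable and dense in $I$. By hypothesis each is $\fsig$, and a codense $\fsig$ subset of $I$---being a countable union of closed subsets of $I$ each with empty interior---is meager in $I$. Hence $I$ is meager in itself, contradicting the Baire category theorem. (This step is the familiar one: no definable subset of $\R$ can be dense and codense in an interval.)

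For the second, suppose toward a contradiction that $\rr$ defines a Cantor set $C\subseteq\R$. Being closed and totally disconnected, $C$ is nowhere dense; let $L\subseteq C$ consist of the left endpoints of the bounded complementary intervals of $C$ together with $\max C$. Then $L$ is definable and countable, and because $C$ is nowhere dense and perfect one checks that $L$ is dense in $C$. Hence $C\setminus L$ is definable and codense in $C$; by hypothesis it is $\fsig$, so (as above) it is meager in $C$. Since $C$ has no isolated points, the countable set $L$ is also meager in $C$, whence $C$ is meager in itself---contradicting Baire, as $C$ is compact.

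With both hypotheses in hand, Corollary~\ref{nocantorintro} yields $\tdim=\adim=\dim$ on all definable sets. The point requiring an idea is the second hypothesis: that a definable Cantor set, innocuous as a single closed definable set, is nonetheless incompatible with the $\fsig$ assumption---the leverage being the countable \emph{definable} dense subset of $C$ furnished by its gap endpoints. The remainder is bookkeeping, following the proof of Corollary~\ref{nocantorintro}.
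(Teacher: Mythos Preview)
Your proof is correct and follows essentially the same route as the paper: reduce to Corollary~\ref{nocantorintro} by verifying its two hypotheses via Baire category, the second using the (definable, countable, dense-in-$C$) set of gap endpoints of a putative definable Cantor set. The paper phrases the Cantor-set step contrapositively---$C\setminus L$ is comeager and codense in $C$, hence not $\fsig$---while you assume it is $\fsig$ and conclude $C$ is meager in itself; these are the same argument. (Minor remarks: the clause ``with rational endpoints'' is unnecessary since every real is a constant in $\rbar$, and adjoining $\max C$ to $L$ is harmless but not needed.)
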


\begin{proof}
By~\ref{nocantorintro}, it suffices to show that then every definable subset of $\R$ has interior or is nowhere dense, and no Cantor subsets of $\R$ are definable.

Let $A\subseteq \R$ be definable and dense in a nonempty open interval $I$.
Since both $I\cap A$ and $I\setminus A$ are definable, they are each $\fsig$ by assumption.
As $I\setminus A$ has no interior, it is meager (in the sense of Baire).
Hence, $I\cap A$ is nonmeager, and thus has interior.

If $A\subseteq \R$ is a Cantor set, then the complement in $A$ of the set of left endpoints of the complementary intervals of $A$ is definable (exercise) and not $\fsig$ (because it is both comeager and codense in the closed set $A$).
\end{proof}

In particular,

\begin{corollary}\label{intorcntbl}
If every definable subset of $\R$ has interior or is countable\tx, then $\tdim=\adim=\dim$ on all definable sets.
\end{corollary}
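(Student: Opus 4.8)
The plan is to derive the statement from Corollary~\ref{fsigintro} by showing that the stated hypothesis forces every definable subset of $\R$ to be $\fsig$. So I would assume that every definable subset of $\R$ has interior or is countable, fix a definable $A\subseteq\R$, and aim to prove $A$ is $\fsig$. Since the interior of a definable set is definable, both $\Int A$ and $A\setminus\Int A$ are definable.

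The first step is the observation that $A\setminus\Int A$ has empty interior: any open interval contained in $A$ is already contained in $\Int A$, hence disjoint from $A\setminus\Int A$. By hypothesis $A\setminus\Int A$ is therefore countable, and a countable subset of $\R$ is a countable union of singletons, hence $\fsig$. On the other hand $\Int A$ is open in $\R$, hence a countable union of closed intervals, hence also $\fsig$. Therefore $A=\Int A\cup(A\setminus\Int A)$ is $\fsig$. As $A$ was an arbitrary definable subset of $\R$, Corollary~\ref{fsigintro} then yields $\tdim=\adim=\dim$ on all definable sets.

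I expect no substantive obstacle here; this really is an ``in particular'' consequence of~\ref{fsigintro}. The only two points needing a moment's care are that the interior of a definable set is definable (so that $A\setminus\Int A$ is subject to the hypothesis) and that $A\setminus\Int A$ is genuinely interior-free, both of which are routine.
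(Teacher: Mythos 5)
Your proposal is correct and matches the paper's intent: the paper derives this corollary from Corollary~\ref{fsigintro} (it is stated with ``In particular''), and your decomposition $A=\Int A\cup(A\setminus\Int A)$ — with $\Int A$ open hence $\fsig$, and $A\setminus\Int A$ definable, interior-free, hence countable and $\fsig$ — is exactly the routine verification that the hypothesis forces every definable subset of $\R$ to be $\fsig$. Nothing further is needed.
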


There are many examples of $\rr$ such that every definable subset of~$\R$ has interior or is countable.
Indeed, there are many examples such that every definable subset of~$\R$ has interior or is finite (o\nbd minimality)---see~\cite{geocat} for just a few---but there are also examples that define countably infinite sets; for some,
see van den Dries~\cite{twoz}, Friedman and Miller~\cites{ominsparse,fast}, Miller and Tyne~\cite{itseq}, \cite{tameness}, and~\ref{infrank} below.




Two important precursors to this paper should be mentioned even though there is no formal dependence.
(i)~In joint work with Fornasiero~\cite{fhm}, we established that if $E\subseteq \R$ is nonempty, bounded and nowhere dense\tx, and $(\rbar, E)$ does not define $\N$, then $\udim E=0$.
Ideas from the proof\footnote{One of which, as noted in~\cite{fhm}, was communicated to Miller by K.~Falconer in personal correspondence. It was also Falconer who first drew our attention to Assouad dimension.}
play a crucial role in the proof of Theorem~A.
(ii)~In 2011, Fornasiero announced
a weaker version of
Theorem~A, namely,
that it holds with $\hdim$ in place of $\adim$ (subject to the usual proviso that $\hdim\emptyset$ is often defined to be $0$;
see Fornasiero, Hieronymi and Walberg~\cite{fhw} for more current information.
Thus, it would suffice for us here to show just that $\hdim=\adim$ on all projections of nonempty closed definable sets
if $\rr$ does not define $\N$.
But we have not been able to accomplish this, as we do not know of any appropriately useful criteria for checking $\hdim=\adim$.
Neither does Fornasiero's proof generalize to $\adim$ as it relies on properties of $\hdim$ that are known to fail for $\adim$.
On the other hand, we could appeal to~\cite{fhw}
for the ``$\tdim=\dim$'' part of
Theorem~A,
but we shall give our own proof (\ref{uniform}) as a natural step toward establishing
Theorem~A.

Here is an outline of the rest of this paper.
Section~\ref{S:prelims} consists of some technical preliminaries, mostly topological, culminating in the proof of the ``$\tdim=\dim$'' part of
Theorem~A.
While the proof of the ``$\dim=\adim$'' part of
Theorem~A requires an actual definition of $\adim$, the cases $n=1$ and $\dim=0$ require only two easily-stated properties beyond those of~\ref{adimfactsintro}, and the proofs illustrate some key ideas needed for the proof of
Theorem~A as a whole.
Hence, with an eye toward possible generalization, we present in Section~\ref{S:fhmgen} an axiomatic proof of these basic cases.
We also dispose of the trivial (relative to existing technology) case that $\rr$ defines no infinite discrete closed subsets of $\R$.
In Section~\ref{S:dims}, we give the definitions of $\adim$ and $\udim$, and prove Theorem~A via a more technical result (Theorem~B).
We conclude in Section~\ref{S:conc} with a few remarks.

%
%
%
%
%

\section{Preliminaries}\label{S:prelims}

Let $X,Y$ be sets.
Given $f\colon X\to Y$
and $S\subseteq X$, we let $f\rest S$ denote the restriction of
$f$ to $S$.
We identify a function $f\colon X^0\to
Y$ with the constant $f(0)\in Y$.
Given $Z\subseteq X\times Y$ and $x\in X$, we
put $Z_x:=\set{y\in Y: (x,y)\in Z}$;
this notation will be used primarily in settings where both $X$ and $Y$ are finite cartesian powers of~$\R$, but it can also be used to resolve any potential ambiguity caused by the usual kinds of subscripting of indices that arise in mathematical exposition.
If $Y$ is a product $Y_1\times Y_2$ and $(x,u)\in X\times Y_1$, we tend to abbreviate $Z_{(x,u)}$ by $Z_{x,u}$; then
$(Z_x)_u=Z_{x,u}$.



If $(X,\mathrm{d})$ is a metric space, then $X^n$ is assumed to be equipped with the $\sup$ metric
$(a,b)\mapsto
\sup\{\,\mathrm{d}(a_i,b_i):i=1,\dots,n\}$.
In particular, we use the
$\sup$ norm $\abs{a}:=\sup\{\abs{a_i}: i=1,\dots,n\}$ in $\R^n$.



For
our purposes, \textbf{intervals} (in $\R$) always have interior.
The
usual notation is employed for the various kinds of intervals.
We
sometimes write $\R^{>0}$ or $(0,\infty)$ instead of $(0,+\infty)$.
The interval $[0,1]$ is denoted by $\I$.
A \textbf{box} in~$\R^n$ is
an $n$\nbd fold product of open intervals, and a \textbf{closed
box} is a product of closed intervals.


The set $E$ is
\textbf{discrete} if all of its points
are isolated, \textbf{locally closed} if it is open in its closure,
and \textbf{dense in $C\subseteq \R^n$} if $\Cl(C\cap E)=\Cl(C)$.


\begin{dblank}\label{tdimfacts}
Some basic properties of $\tdim$ in $\R^n$ (\cite{engelking}*{1.8}):
\begin{enumerate}
\item
$\tdim E=n$ if and only if $E$ is nonempty and open.
\item
$\tdim$ is increasing.
\item
$\tdim$ is \textbf{countably stable} on closed sets, that is, $
\tdim \bigcup_{k\in\N}F_k=\max_{k\in\N}\tdim F_k
$
for any sequence $(F_k)_{k\in\N}$ of closed subsets of $\R^n$.
(Hence,
if
$E$ is $\fsig$, then $\tdim E$ is realized on some compact subset of $E$.)
\item
$\tdim$ is \textbf{sublogarithmic}, that is, if
$A\subseteq \R^j$ and $B\subseteq\R^k$, then
$
\tdim (A\times B)\leq \tdim A + \tdim B.
$
\item
$\tdim f(E)=\tdim E$ for homeomorphisms $f\colon E\to f(E)\subseteq \R^p$ and $p\in\N$ (\textbf{topological invariance}).
\end{enumerate}
\end{dblank}


\begin{dblank}\label{dimRn}More on $\dim$ in $\R^n$ (proofs are exercises):
\begin{enumerate}
\item
$\dim$ is countably stable on closed sets (by the Baire Category Theorem).
\item
For $k\leq m\leq n$ and $*$ in $\{\leq,\geq,=,<,>\}$\tx, the set
$
\set{x\in \R^m: \dim E_x * k}
$
is definable.
\end{enumerate}
\end{dblank}


\subsubsection*{Note to logicians:} (2)~holds with ``$\emptyset$\nbd definable in $(\R,<,E)$'' instead of ``definable in $\rr$''.

\begin{dblank}\label{dimtdim}
Some relations between $\dim$ and $\tdim$ in $\R^n$ (proofs are exercises):
\begin{enumerate}
\item
$\tdim =\dim$ on subsets of $\R$.
\item
$\dim E=0\Rightarrow \tdim E = 0$.
\item
$\tdim\leq \dim$ on subsets of $\R^2$.
\item
$\tdim \leq \dim $ on compact sets~\cite{engelking}*{Exercise~1.8.C}.
\item
$\tdim \leq \dim $ on $\fsig$ sets.
\item
$\tdimcl\leq \dimcl$.
\end{enumerate}
\end{dblank}

(For (5), use~(4) and that $\tdim$ and $\dim$ are countably stable on closed sets.)

\subsubsection*{Remark}
There are $\gdelta$ sets in $\R^3$ such that $\tdim>\dim$;
see~\cite{engelking}*{1.10.23}.

\begin{dblank}\label{rotatecantor}
If $C$ is the standard ``middle-thirds'' Cantor set, then $\tdim\set{(x-y,x+y): x,y\in C}=0$
and $\dim\set{(x-y,x+y): x,y\in C}=1$ (recall that the difference set of $C$ has interior).
Hence, strict inequality is possible in~\ref{dimtdim}.3, even on compact sets.
This also shows that $\dim$ is not necessarily preserved under isometries.
(This elegant example was brought to our attention by A.~Fornasiero.)
\end{dblank}

\begin{dblank}\label{familydim}

We shall have need later for notions of uniformity for collections of sets.
Given a set $X$,
we say that a map
$f\colon \mathcal P(\mathcal P(X))\to\R\cup\{\pm\infty\}$ is
\textbf{increasing} if $f(\mathcal A)\leq f(\mathcal B)$ for all
$\mathcal A,\mathcal B\subseteq \mathcal P(X)$ such that
$\mathcal A$ is a refinement of $\mathcal B$, and \textbf{stable} if
$$f(\set{A\cup B: (A,B)\in\mathcal A\times \mathcal B})=\max(f(\mathcal A),f(\mathcal B))$$ for all $\mathcal A,\mathcal B\subseteq \mathcal P(X)$.
There are obvious modifications of ``logarithmic'' and ``sublogarithmic''
for cartesian products.
For illustrative purposes, if $X$ is a topological space, then we put
$\dim \mathcal A=\sup\set{\dim A:A\in\mathcal A}$ for $\mathcal A\subseteq \mathcal P(X^m)$, suppressing the subscript on $\dim_m$ as usual.
Some easy basic facts:
\begin{itemize}
\item
$\dim A=\dim\{A\}$ for all $A\subseteq X^m$.
\item
If $\mathcal A\subseteq \mathcal P(X^m)$, then $\dim \mathcal A=m$ if and only if some $A\in \mathcal A$ has interior.
\item
If $\mathcal A\subseteq \mathcal P(X)$, then $\dim \mathcal A=0$ if and only if some $A\in\mathcal A$ is nonempty and no $A\in\mathcal A$ has interior.
\item
$\dim$ is increasing.
\item
$\dim$ is stable on the collection of compact subsets of $X^m$.
\item
$\dim$ is logarithmic.
\end{itemize}
\end{dblank}

The \textbf{open core} of $\rr$, denoted by $\rr^\circ$, is the expansion of the set $\R$ by all open sets (of any arity) definable in $\rr$.
(Equivalently: $\rr^\circ$ is identified with the smallest structure $\mathfrak S$ on $\R$ such that each $\mathfrak S_m$ contains every open subset of $\R^m$ definable in $\rr$.)
Some easy observations:
\begin{blank}\label{opencoredsig}
\begin{enumerate}
\item
$\rr^\circ$ is interdefinable with the expansion of $\rbar$ by all projections of closed sets definable in $\rr$.
\item
$\rr$ defines $\N$ if and only if $\rr^\circ$ defines $\N$ if and only $\rr^\circ$ is interdefinable with $(\rbar,\N)$.
\end{enumerate}
\end{blank}

The structure $\rr$ is \textbf{o-minimal} (short for ``order minimal'') if every definable subset of $\R$ is a finite union of points and open intervals.
O\nbd minimal expansions of $\rbar$ are often regarded as the best-behaved expansions of $\rbar$, indeed, they have so many nice properties that it takes several pages just to state even the most important ones; see, \egg,~\cite{geocat}*{\S4}.
It is immediate from definitions that

\begin{blank}
$\rr$ is o\nbd minimal if and only if
$\rr^\circ$ is o\nbd minimal and every
subset of $\R$ definable in $\rr$ has interior or is nowhere dense.
\end{blank}

More substantial:

\begin{blank}\label{omincorereduction}
$\rr^\circ$ is o\nbd minimal if and only if $\rr$ defines no infinite discrete closed subsets of $\R$.
\end{blank}

\begin{proof}
The forward implication is immediate from definitions.
Suppose that every discrete closed subset of $\R$ definable in $\rr$ is finite.
By~\cite{hier2}*{Lemma~2}
every discrete subset of $\R$ definable in $\rr$ is finite.
By Miller and Speissegger~\cite{opencore}*{Theorem~(b)}, $\rr^\circ$ is o\nbd minimal.
\end{proof}

For some examples of non-o\nbd minimal expansions of $\rbar$ having o\nbd minimal open core,
see:
Dolich, Miller and Steinhorn~\cite{dms2}*{1.11 and~1.12} and~\cite{dms3};
G{\"u}nayd{\i}n and Hieronymi~\cite{GH};
and~\cite{opencore}*{4.2}.

\subsection*{Projections of closed definable sets}

Following~\cite{opencore}, we let $\Dsig(n)$ denote the collection of all subsets of $\R^n$ that are projections of closed definable sets.
If the ambient $\R^n$ is irrelevant or understood from context, then we usually write just $\Dsig$.
(Thus, we can now state the conclusion of Theorem~A
as ``$\tdim=\adim=\dim$ on $\Dsig$''.)
In this subsection, we review some basic known facts and establish a key technical result (\ref{uniform}) that disposes of the ``$\tdim=\dim$'' part of Theorem~A.

\subsubsection*{Remark}
As every $\Dsig$ set is $\fsig$,
one might be tempted to think of ``$\Dsig$'' as abbreviating ``definably $\fsig$'',
but caution is in order, as there at least two other reasonable notions for this:
(i)~definable and $\fsig$;
(ii)~definable and a countable union of closed definable sets.
In general, these notions differ from $\Dsig$, as well as from each other.

\begin{blank}
\begin{enumerate}
\item
$\Dsig$ is closed under projection\tx, cartesian product\tx, union and intersection.
\item
Every constructible definable set is $\Dsig$.
\item
If $Z\in \Dsig(m+n)$ and $x\in\R^m$\tx, then $Z_x\in \Dsig(n)$.
\end{enumerate}
\end{blank}

\begin{proof}
(1).
Closure under projection is immediate from definition,
and closure under cartesian product is nearly so.
We now deal with unions and intersections.
Let $E_1,E_2\in \Dsig(n)$.
For $i=1,2$, let $F_i\subseteq \R^{p_i}$ be closed and definable such that $E_i$ is a projection of $F_i$.
By permutation of coordinates, we may assume that $E_i$ is the projection of $F_i$ on the first $n$ coordinates.
After embedding into a common $\R^p$, we may take $p_1=p_2=p$.
Then $E_1\cup E_2$ is the projection on the first $n$~coordinates of $F_1\cup F_2$, and $E_1\cap E_2$ is the projection on the first $n$~coordinates of
\[
\set{(x,y,z)\in \R^{n+2p}: (x,y)\in F_1\And (x,z)\in F_2}.
\]

(2).
By Dougherty and Miller~\cite{DoM}, every constructible definable set is a boolean combination of open definable sets.
Hence, as $\Dsig$ contains all closed definable sets and is closed under unions and intersections, it suffices to show that all open definable sets are $\Dsig$.
If $E$ is open (or even just locally closed), then it is the projection on the first $n$ coordinates of
\[
\set{(x,t)\in E\times \R: t\dis(x,(\Cl E)\setminus E)\geq 1}.\qedhere
\]

(3).
$Z_x$ is the projection of $Z\cap (\{x\}\times \R^n)$ on the last $n$ coordinates.

\end{proof}

$\Dsig$ need not be closed under complementation; indeed, complements of $\Dsig$ sets need not be $\fsig$
(\egg, if $\rr$ defines $\N$, then $\Q$ is $\Dsig$ and $\R\setminus\Q$ is not $\fsig$).
If $\Dsig$ is closed under complementation, then $\Dsig=\mathfrak S(\rr^\circ)$.



We use the next result often.

\begin{blank}\label{Dsigequiv}
The following are equivalent.
\begin{enumerate}
\item
$E$ is $\Dsig$.
\item
There is a definable $X\subseteq \R^{>0}\times \R^n$ such that $E=\bigcup_{r>0}X_r$ and $(X_r)_{r>0}$ is an increasing family of compact sets.
\item
$E$ is the projection on the first $n$ coordinates of a closed definable $F\subseteq \R^{n+1}$.
\item
There are a closed definable $F\subseteq\R^{n+1}$ and a surjective continuous definable map $f\colon F\to E$.
\item
There are a closed definable $F\subseteq\R^p$ and a surjective continuous definable map $f\colon F\to E$.
\end{enumerate}
\end{blank}
\begin{proof}
For (1)$\Rightarrow$(2),
if $E$ is the projection $\pi F$ of a closed definable $F\subseteq \R^p$, then put $X=\set{(r,x): x\in \pi(F\cap [-r,r]^p)}$.
For (2)$\Rightarrow$(3),
let $F$ be the closure of $\set{(x,r)\in \R^{n+1}: x\in X_r}$.
The rest is routine.
\end{proof}


%

\begin{blank}\label{fiberdim}
If $E$ is $\Dsig$\tx, then
$\set{x\in \R^m: \dim E_x\geq k}$
is $\Dsig$.
\end{blank}

\begin{proof}
As $\Dsig$ is closed under finite unions and permutation of coordinates, it suffices to let $\pi$ be a projection $\R^{n-m}\to\R^k$ and show that $S:=\set{x\in \R^m: \text{$\pi(E_x)$ has interior}}$ is $\Dsig$.
Write $E=\bigcup_{r>0} X_r$ as in~\ref{Dsigequiv}; then
$$
S=\bigcup_{r>0}\set{x\in\R^m: \exists y\in\R^k\ \textstyle {\prod_{i=1}^k}[y_i,(1+1/r)y_i]\subseteq \pi(X_{r,x})}.
$$
Let $x\in\R^m$ and $r>0$.
As $X_r$ is compact, so is $\pi(X_{r,x})$, hence so is
$$
\set{x\in\R^m: \exists y\in\R^k\ \textstyle{\prod_{i=1}^k}[y_i,(1+1/r)y_i]\subseteq \pi(X_{r,x})}.
$$
The result follows.
\end{proof}

We say that a map $f$ is $\Dsig$ if it is $\Dsig$ as a set (that is, if its graph is $\Dsig$).

\begin{blank}
If $f\colon E\to\R^p$ is $\Dsig$\tx, then so are $E$\tx, $f(E)$\tx, and $f\inv(Y)$ for every $Y\in \Dsig(p)$.
\end{blank}

\begin{proof}
Observe that $E$ is the projection of $f$ on the first $n$ coordinates, $f(E)$ is the projection of $f$ on the last $p$ coordinates, and $f\inv(Y)$ is the projection on the first $n$ coordinates of $f\cap (\R^n\times Y)$.
\end{proof}

%

Given $f\colon  E\to \R^p$,
we let $\mathcal D(f)$ denote the set of all $x\in E$ such that $f$ is discontinuous at $x$.
For $s>0$, we let $\mathcal D(f,s)$ be the set of $x\in E$ such that the oscillation
of $f$ at $x$ (defined as is usual in analysis) is at least $s$.
Observe that $\mathcal D(f,s)$ is closed in $E$ and $\mathcal D(f)=\bigcup_{s>0}\mathcal D(f,s)$.
If $f$ is definable, then so is $\mathcal D(f,s)$.

\begin{blank}\label{D(f)}
If $E$ is $\Dsig$ and $f\colon  E\to \R^p$ is definable\tx, then $\mathcal D(f)$ is $\Dsig$.
\end{blank}

\begin{proof}
If $E=\bigcup_{r>0}X_r$ as in~\ref{Dsigequiv}, then
$\mathcal D(f)=\bigcup_{r>0}\mathcal D(f\rest X_r,1/r)$.
\end{proof}

%
%


%
%

Recall that a closed box in $\R^n$ is the closure of a nonempty open box in $\R^n$.
We define \textbf{rectangular partitions} of closed boxes by induction:
A rectangular partition of a closed interval $B\subseteq \R$ is a finite cover of $B$ by closed subintervals whose interiors are pairwise disjoint; a rectangular partition of a closed box $B\subseteq \R^{n+1}$ is the cartesian product of a rectangular partition of the projection of $B$ on the first $n$ variables with a rectangular partition of the projection of $B$ on the last variable.

Next is a key technical result.

\begin{proposition}\label{uniform}
The following are equivalent.
\begin{enumerate}
\item
$\rr$ does not define $\N$.
\item
For all definable $h\colon A^p\to\R$ such that $A\subseteq \R$ is discrete\tx, $h(A^p)$ is nowhere dense.
\item
If $E$ is $\Dsig$\tx, then for every $m\leq n$ and closed box $B\subseteq \R^{n-m}$ there is
a rectangular partition $\mathcal P$ of $B$ such that for each $u\in \R^m$ there exists $P\in \mathcal P$ disjoint from
$E_u\setminus\Int(E_u)$.
\item
Each $\Dsig$ either has interior or is nowhere dense.
\item
$\tdim=\dim$ on $\Dsig$.
\item
$\tdim\geq \dim$ on all compact definable subsets of $\R^2$.
\end{enumerate}
\end{proposition}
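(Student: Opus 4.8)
The plan is to establish the cycle $(1)\Rightarrow(3)\Rightarrow(4)\Rightarrow(5)\Rightarrow(6)\Rightarrow(1)$ together with $(1)\Rightarrow(2)\Rightarrow(1)$, so that (2) sits as an auxiliary characterization. I would expect the easy links to be $(4)\Rightarrow(5)$ and $(5)\Rightarrow(6)$: for the former, use that every $\Dsig$ set is $\fsig$, so $\tdim\leq\dim$ always on $\Dsig$ by \ref{dimtdim}.5, while the reverse inequality $\dim\leq\tdim$ on $\Dsig$ follows by induction on $\dim$ using~\ref{dimRn} and the assumption in (4) to find, inside a top-dimensional $\Dsig$ slice, a box on which a continuous definable section lives (\cf~the argument in~\ref{nocantorintro} via~\cite{tameness}*{3.4}); $(5)\Rightarrow(6)$ is immediate since compact definable sets are $\Dsig$. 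For $(6)\Rightarrow(1)$, invoke~\ref{rotatecantor}: if $\rr$ defined $\N$ it would define the standard Cantor set $C$, hence the rotated compact set $\set{(x-y,x+y):x,y\in C}\subseteq\R^2$, which has $\tdim=0<1=\dim$, contradicting (6).

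For $(3)\Rightarrow(4)$: suppose $E$ is $\Dsig$ with no interior; we want $E$ nowhere dense, i.e. $\Cl E$ has no interior. Apply (3) with $m=0$, $n$ equal to the ambient dimension, and $B$ an arbitrary closed box: (3) yields a rectangular partition $\mathcal P$ of $B$ and a single $P\in\mathcal P$ with $P\cap(E\setminus\Int E)=\emptyset$; since $\Int E=\emptyset$ this says $P\cap E=\emptyset$, so $\Int P$ is a nonempty open box missing $E$, whence $E$ is not dense in $B$. As $B$ was arbitrary, $E$ is nowhere dense. (The role of the parameter $u$ and of $E_u\setminus\Int(E_u)$ in (3) is exactly to make the statement uniform over definable families, which is what powers the induction in $(3)\Rightarrow(5)$ more directly, but the instance above suffices to reach (4).)

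The substantive implications are $(1)\Rightarrow(3)$ and $(1)\Rightarrow(2)$, and I expect $(1)\Rightarrow(3)$ to be the main obstacle. The idea: argue contrapositively. If (3) fails, there are a $\Dsig$ set $E\subseteq\R^n$, an $m\leq n$, and a closed box $B\subseteq\R^{n-m}$ such that every rectangular partition $\mathcal P$ of $B$ admits some parameter $u\in\R^m$ for which every $P\in\mathcal P$ meets the frontier-type set $E_u\setminus\Int(E_u)$. Writing $E=\bigcup_{r>0}X_r$ as in~\ref{Dsigequiv} with the $X_r$ compact and increasing, one extracts from this failure a definable discrete set $A\subseteq\R$ and a definable map $h\colon A^p\to\R$ whose image $h(A^p)$ is somewhere dense but has no interior — roughly, $A$ indexes the finitely many subintervals produced by successively refining $B$ dyadically (at each stage the "bad" parameter forces the partition not to separate $E_u$ from its boundary, so refinement must continue), and the obstruction is coded by a real parameter varying over a discrete definable set. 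This is the analogue, in the present setting, of the combinatorial core of~\cite{fhm}. Granting such $h$, one then shows $(2)$ fails, and finally $(2)\Rightarrow(1)$ by the standard argument that if some definable $h\colon A^p\to\R$ with $A$ discrete has somewhere-dense image, then — using that $A$, being definable and discrete, is either finite or lets one define a copy of $\N$ — one recovers a definition of $\N$ (if $A$ is finite the image is finite, hence nowhere dense, a contradiction; so $A$ is infinite discrete, and an infinite discrete definable subset of $\R$ together with the field structure defines $\N$). Thus the hard work is the extraction of $(A,h)$ from the failure of the uniform rectangular-partition property; the rest is bookkeeping with~\ref{Dsigequiv},~\ref{fiberdim} and~\ref{D(f)}.
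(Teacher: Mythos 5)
Your easy links are essentially the paper's: (3)$\Rightarrow$(4) by taking $m=0$, (4)$\Rightarrow$(5) via a definable section whose discontinuity set is nowhere dense (the paper uses the lexicographic-minimum section after reducing to compact $E$ by countable stability, plus~\ref{D(f)}), (5)$\Rightarrow$(6) trivially, and (6)$\Rightarrow$(1) via~\ref{rotatecantor}. But the substantive content is missing, and one step rests on a false claim. You assert that ``an infinite discrete definable subset of $\R$ together with the field structure defines $\N$''; this is wrong --- $(\rbar,2^{\Z})$ (van den Dries,~\cite{twoz}) defines an infinite closed discrete set yet does not define $\N$. The correct statement, that a definable function on a power of a discrete set with \emph{somewhere dense} image forces definability of $\N$, is exactly the contrapositive of (1)$\Rightarrow$(2), i.e.\ Hieronymi's theorem (\cite{hier2}*{Theorem~A}), which the paper cites as its one external deep input; it cannot be recovered by your one-line argument, and you have in effect assumed the hardest ingredient while mislabeling it as the (genuinely easy) direction (2)$\Rightarrow$(1), which only needs $h(m,n)=m/(n+1)$ on $\N^2$.

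The second gap is the extraction of the pair $(A,h)$ from the failure of (3), which you describe only as ``successively refining $B$ dyadically'' with the obstruction ``coded by a real parameter varying over a discrete definable set.'' This is precisely where the paper's (2)$\Rightarrow$(3) proof does real work: it reduces to $n=m+1$ and bounded $E\subseteq\I^{m+1}$, reformulates (3) as finding $\e>0$ so that each $E_u$ or $\I\setminus E_u$ contains an interval of length $\e$, splits on whether $\rr^\circ$ is o\nbd minimal (cell decomposition in that case), and otherwise uses~\ref{omincorereduction} to get an infinite closed discrete $D$, writes $E=\bigcup_{r\in D}X_r$ with $X_r$ compact, makes \emph{definable} choices of bad parameters via lexicographic minima and maximal distance (the functions $\alpha,\beta$), takes midpoints of complementary intervals, and codes the resulting family into a single discrete definable $A\subseteq\R$ via $t\mapsto r+(\sigma(r)-r)t$ so that a single definable $h\colon A\to\R$ has dense image. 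Your dyadic-refinement sketch produces neither a definable choice of the witnesses $u$ nor a single discrete set carrying a definable function, so the core implication is asserted rather than proved.
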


(The implication (1)$\Rightarrow$(3) can be regarded as a uniform version of (1)$\Rightarrow$(2), which is a theorem on its own; see~\cite{hier2}.)

\begin{proof}
(1)$\Rightarrow$(2) is a special case of~\cite{hier2}*{Theorem~A}.


(2)$\Rightarrow$(3).
By replacing $E$ with $\R\times E$, we may assume that $m>0$.
The result is trivial if $m=n$, and an easy induction allows us to reduce to the case that $n=m+1$, that is, each $E_u\subseteq \R$.

We first do the case that $E$ is bounded.
Since $\Dsig$ is closed under intersection and continuous definable maps,
we reduce to the case that $E\subseteq \I^{m+1}$ and $B=\I$
(recall that $\I=[0,1]$).
It suffices now to find $\e>0$ such that if $u\in \I^m$, then at least one of $E_u$ or
$\I\setminus E_u$ contains an interval of length $\e$.
This follows easily from cell decomposition (see, \egg, \cite{geocat}*{4.2}) if $\rr^\circ$ is o\nbd minimal, so assume that $\rr^\circ$ is not o\nbd minimal.
By~\ref{omincorereduction}, $\rr$ defines an infinite discrete closed $D\subseteq \R^{>0}$, and we have
$E=\bigcup_{r\in D} X_r$ where $(X_r)_{r\in D}$ is an increasing definable family of compact sets.
Define $Y,Z\subseteq D^2\times \I^m$ by $u\in Y_{r,s}$ if $X_{r,u}$ contains an interval of length $1/s$, and
$u\in Z_{r,s}$ if every interval contained in $\I\setminus X_{r,u}$ has length at most $1/s$.
It suffices to show that there is some $r\in D$ such that
$Z_{r,s}\setminus Y_{r,s}=\emptyset$ for every $s\in D\cap (r,\infty)$; suppose this fails.
Then we have a definable function $\alpha\colon D\to D$ given by
$$
\alpha(r)=\min\set{s\in D\cap (r,\infty): Z_{r,s}\setminus Y_{r,s}\neq \emptyset}.
$$
Observe that for every $r\in D$, both $Y_{r,\alpha(r)}$ and $Z_{r,\alpha(r)}$ are compact (for the former, recall the proof of~\ref{fiberdim}).
Let $r\in D$.
If $Z_{r,\alpha(r)}\cap Y_{r,\alpha(r)}=\emptyset$, let $\beta(r)$ be the lexicographic minimum of $Z_{r,\alpha(r)}$.
If $Z_{r,\alpha(r)}\cap Y_{r,\alpha(r)}\neq \emptyset$, let $\beta(r)$ be the lexicographic minimum of all $u\in Z_{r,\alpha(r)}$ such that
the distance of $u$ to $Y_{r,\alpha(r)}$ is maximal.
Then $\beta\colon D\to \R^m$ is definable and
$\beta(r)\in Z_{r,\alpha(r)}\setminus Y_{r,\alpha(r)}$
for all $r\in D$.
Since $X_r$ is closed, so is $X_{r,u}\setminus \Int(X_{r,u})$.
Define
$M\subseteq D\times \I^m\times \I$ by
$
(r,u,t)\in M
$
if and only $t$ is a midpoint of a complementary-in-$\I$ interval of $X_{r,u}\setminus \Int(X_{r,u})$.
Let $\sigma(r)$ be the successor of $r$ in $D$, that is, $\sigma(r)=\min(D\cap (r,\infty))$.
Now $M$ is definable, each $M_{r,u}$ is discrete, and the image of $M_{\alpha(r),\beta(r)}$ under the function $x\mapsto r+(\sigma(r)-r)x\colon \R\to\R$ is discrete and contained in the interval $(r,\sigma(r))$.
Let $A$ be the union of all such sets as $r$ ranges over $D$; then $A$ is discrete and definable, and for every $a\in A$ there are unique $r\in D$ and $t\in M_{\alpha(r),\beta(r)}$ such that $a=r+(\sigma(r)-r)t$.
Let
$g\colon M\to \R$ be given by
$(r,u,t)\mapsto \sup ((X_{r,u}\setminus\Int(X_{r,u})\cap (-\infty,t])$, that is,
$g(r,u,t)$ is the left endpoint of the complementary-in-$\I$ interval of $X_{r,u}\setminus\Int(X_{r,u})$ that contains $t$.
Then $g$ is definable and, for each $(r,u)$, the image $g(r,u,M_{r,u})$ is dense in $X_{r,u}\setminus\Int(X_{r,u})$.
Let $h\colon A\to \R$ send
$r+(\sigma(r)-r)t$ in $A$ to $g(\alpha(r), \beta(r),t)$;
then $h$ is definable and $h(A)$ is dense in $\I$, contradicting~(2).
(This finishes the case that $E$ is bounded.)

If $E$ is not bounded, apply the result to the image of $E$ under the map
$$
x\mapsto \left(x_1(1+x_1^2)^{-1/2},\dots,x_n(1+x_n^2)^{-1/2}\right)
$$
and then pull back.

For (3)$\Rightarrow$(4), set $m=0$.

(4)$\Rightarrow$(5).
Suppose that every $\Dsig$ has interior or is nowhere dense.
Let $E$ be $\Dsig$;
we must show that $\dim E=\tdim E$.
By countable stability, it suffices to consider the case that $E$ is compact.
Since $\tdim\leq \dim$ on compact sets, it suffices to show that $\tdim E\geq \dim E$.
By permuting coordinates, we reduce to the case that the projection of $E$ on the first $\dim E$ coordinates contains a box $V$.
Define $f\colon V\to \R^{n-\dim E}$ by letting $f(v)$ be the lexicographic minimum of $E_v$.
By basic real analysis (or \cf~\cite{dms1}*{2.8}), $\mathcal D(f)$ has no interior; by~\ref{D(f)} and~(4), it is nowhere dense.
Thus, there is a box $U\subseteq V$ such that $f\rest U$ is continuous, and so $u\mapsto (u,f(u))$ maps $U$ homeomorphically onto the graph of $f\rest U$.
Hence, $\tdim E\geq \tdim (f\rest U)=\dim E$.

For (6)$\Rightarrow$(1), recall~\ref{rotatecantor} and that $(\rbar,\N)$ defines all Borel sets.
%
\end{proof}



%

\section{Prelude to proof of Theorem~A}\label{S:fhmgen}

We establish in this section some basic cases of Theorem~A whose proofs use at most two more properties of $\adim$ in addition to those listed in~\ref{adimfactsintro}.
We do this for two reasons: (i)~certain key ideas are well illustrated in this simple setting; (ii)~there may be other notions of metric dimension beyond $\adim$ amenable to the same proofs.
Recall that by~\ref{uniform} we have already disposed of the $\tdim=\dim$ part of Theorem~A.

\begin{blank}\label{ominantifrac}
If $\rr^\circ$ is o\nbd minimal\tx, then $\dim=\adim$ on $\Dsig$.
\end{blank}

\begin{proof}
Since every $\Dsig$ is definable in $\rr^\circ$, it suffices to show that if $\rr$ is o\nbd minimal and $E$ is definable, then $\adim E=\dim E$.
By Kurdyka and Parusi\'{n}ski~\cite{kp} (or see Fischer~\cite{lstrat}), there exist
$N\in\N$ and bi-Lipshitz maps
$f_i\colon U_i\to \R^n$ for $i=1,\dots,N$ such that $E$ is the union of the $f_i(U_i)$, each $U_i$ is open in $\R^{n(i)}$, and $n_1\leq \dots\leq n_N=\dim E$.
By Lipschitz invariance, $\adim f_i(U_i)=\adim U_i=n(i)$; by
stability,
\begin{equation*}
\adim E=\max\set{n_i: i=1,\dots,N}=n_N=\dim E.\qedhere
\end{equation*}
\end{proof}

\subsubsection*{Remark}
The proof suggests that if $\rr$ is o\nbd minimal then
any reasonable notion of metric dimension would always coincide with $\dim$ (hence also $\tdim$) on all definable sets.

Next we dispose of the $\dim 0$ case.

\begin{blank}\label{dimzerocase}
If $E\in \Dsig$ and $0=\dim E<\Dim E$\tx, then $\rr$ defines $\N$. \end{blank}

We require for the proof two more properties of $\adim$ in addition to the first five listed in~\ref{adimfactsintro}:

\begin{dblank}\label{theconstant}
There exists $s>1$ such that $\adim A^2\geq s\adim A$ for all definable $A\subseteq\R$.
\end{dblank}

\begin{dblank}\label{zeropowers}
If $A\subseteq \R$ is definable and $\adim A=0$\tx, then $\adim A^k=0$.
\end{dblank}

(Indeed, $\adim E^k=k\adim E$ always, but we do not yet need this.)


\begin{blank}\label{falctrick}
If $A\subseteq \R$ is definable and $\adim A>0$\tx, then there exist $N\in\N$ and $F\colon \R^N\to\R$ definable in $\rbar$ such that $F(A^N)$ is dense.
\end{blank}


(Definability of $A$ is superfluous if one is interested only in Assouad dimension; see Remark~(ii) below after the proof.)

\begin{proof}
Let $Q\colon \R^4\to\R$ be given by
$$
Q(x)=
\begin{cases}
(x_1-x_2)/(x_3-x_4),& x_3\neq x_4\\
0,& x_3=x_4
\end{cases}
$$
We are done if $Q(A^4)$ is dense, so assume otherwise.
Then the vector-difference set of $A^2$ is disjoint
from some
open double cone $C\subseteq \R^2$ centered at the origin.
Let $\ell$ be the line through the origin perpendicular to the axis of~$C$ and let $\pi$ be the orthogonal projection of $\R^2$ onto $\ell$.
The restriction of $\pi$ to $A^2$ is bi-Lipschitz,
and so $\adim \pi(A^2)=\adim A^2\geq s\adim A$, where $s$ is as in~\ref{theconstant}.
Let $\theta$ be the rotation of the plane taking $\ell$ to the real line; then
$\adim (\theta\circ \pi(A^2))\geq s\adim A.
$
We now have a linear function $T_1\colon \R^2\to\R$ such that $\adim T_1(A^2)\geq s\adim A$.
We are done if $Q([T_1(A^2)]^4)$ is dense, so assume otherwise, and repeat the process above to obtain a linear function $T_2\colon \R^2\to\R$ that maps $[T_1(A^2)]^2$ to a set having $\adim$ at least $s^2\adim A$.
Since $0<\adim A\leq 1<s$, we obtain after finitely many repetitions
of this procedure some $m\in \N$ and linear $T\colon \R^m\to\R$ such that $Q(T(A^m)^4)$ is dense, thus producing $F$ as desired.
\end{proof}

\subsubsection*{Remarks}
(i)~Up to the choice of the various finitely many cones $C$ (equivalently, the rotations $\theta$) the construction of $F$ depends only on the least $k\in \N$ such that $s^k\adim A>1$.
(ii)~As mentioned earlier, $\adim E^k=k\adim E$ always. Hence, we could apply~\ref{falctrick} to $\rr=(\rbar,A)$ with $s=2$, and so the assumption of definability is not essential for the conclusion if one is interested only in Assouad dimension.

\begin{proof}[Proof of~\ref{dimzerocase}]
Let $E\in\Dsig$ and $0=\dim E<\adim E$. We must show that $\rr$ defines $\N$.

First, suppose that $E\subseteq \R$.
If $E$ is somewhere dense, then $\N$ is definable by~\ref{uniform}, so assume that $E$ is nowhere dense. Then $\dimcl E=0$, so it suffices to consider the case that $E$ is closed (since $\adim=\adimcl$).
Let $D$ be the set of midpoints of the bounded complementary intervals of $E$, together with $1+\max E$ if $E$ is bounded above.
Then $D$ is discrete and the function $g\colon D\to \R$ mapping
$t\in D$ to the left endpoint of the complementary interval of $E$ that contains $t$ is definable.
As $E$ is closed and has no interior, $g(D)$ is dense in $E$, so $\adim g(D) >0$ (because $\adimcl =\adim$).
With $F$ as in~\ref{falctrick} applied to $A=g(D)$, we have that $F(g(D)^N)$ is somewhere dense.
Hence, there exists $m\in\N$ and definable $h\colon D^m\to \R$ such that $h(D^m)$ is somewhere dense.
By~\ref{uniform}, $\rr$ defines $\N$.

We now dispose of the general case.
For $i=1,\dots,n$, let $\pi_iE$ be the projection of $E$ on the $i$\nbd th coordinate; then $\pi_i E\in\Dsig(1)$ and $\dim \pi_i E=0$.
As $E\subseteq (\bigcup_{i=1}^n \pi_iE)^n$,
we have
$\adim E\leq n\max(\adim \pi_iE)
$
by~\ref{zeropowers} and stability.
Hence, $\adim \pi_i E>0$ for some $i$, and so $\rr$ defines $\N$ by the previous paragraph.
\end{proof}

As an immediate corollary of~\ref{dimzerocase},

\begin{blank}\label{dimzerocasecor}
The following are equivalent.
\begin{itemize}
\item
$\rr$ does not define $\N$.
\item
$\dim=\adim$ on all $\dim 0$ $\Dsig$ sets.
\item
$\dim =\Dim$ on all $\Dsig$ sets in $\R$.
\end{itemize}
\end{blank}
\noindent
(Recall that $\adim\set{1/k: 0\neq k\in\N}\neq 0$.)

This is about as far as we can get by using only easily-stated basic facts like~\ref{theconstant}, \ref{zeropowers} and those of~\ref{adimfactsintro}, but a bit more discussion will help motivate some of the technicalities coming in the next section.
As we shall see, $\adim(A\times B)\leq \adim A+\adim B$ for all $A\in\R^j$ and $B\in\R^k$.
Hence, if the projection $\pi E$ of $E$ on the first $m$ coordinates has interior and the projection $\tau E$ of $E$ on the last $n-m$ coordinates has $\adim 0$, then $\adim E=\dim E$.
But the case $\adim\tau E=0$ will be exceptional, so what can be said in general?
Assume that $E$ is $\Dsig$ and $\rr$ does not define $\N$.
It is easy to see via the Baire Category Theorem
that the set $\set{x\in \R^m: \dim E_x>0}$ is meager; as it also $\Dsig$ (\ref{fiberdim}), it is nowhere dense by~\ref{uniform}.
Then $\set{x\in \R^m: \dim E_x=0}$ has interior, so by monotonicity, stability and~\ref{dimzerocase}, we may reduce to the case that $\adim E_x\leq 0$ for all $x\in \R^m$.
One can now imagine that under appropriate inductive assumptions we should be able to reduce to the case that $n=m+1$, and even that for every permutation $\sigma$ of coordinates, $\pi\sigma E$ has interior and
$\adim ((\sigma E)_x)\leq 0$ for every $x\in\R^m$.
But here our naive axiomatic approach seems to stall, even for $n=2$.
(The reader who wishes to get an idea of the difficulties could consider trying to show now that if $g\colon [0,1]\to \R$ is continuous and $(\rbar,g)$ does not define $\N$, then $\adim g=1$.)

\section{$\adim$, $\udim$, and the proof of Theorem~A}\label{S:dims}

Given a metric space $(X,\mathrm{d})$, $S\subseteq X$ and $r>0$
put
$$
\net_r S=\sup_{k\in\N}\exists x_1\dots x_k\in S,\ \bigwedge_{i\neq j}\mathrm{d}(x_i,x_j)\geq r.
$$
The \textbf{Assouad dimension} of $S$, denoted in this paper by $\adim S$, is the infimum of $\alpha\in\R$ such that
$$
\Set{(r/R)^{\alpha}\net_r\set{y\in S:\mathrm{d}(y,x)\leq R}:x\in S,\ 0<r<R<+\infty}
$$
is bounded.
Our primary source for information on $\adim$ is~\cite{luu}, but there: (a)~$\adim$ is denoted by $\dim_{\mathrm A}$,
$\tdim$ by $\dim$, and our notion of $\dim$ is not used in any explicit sense; (b)~the infimum is taken over $\alpha\geq 0$ (and so $\dim_{\mathrm A}\emptyset=0$, as opposed to $\adim\emptyset=-\infty$ by our definition).
In addition to the properties listed in~\ref{adimfactsintro}, $\adim$ is
sublogarithmic, and even logarithmic on cartesian \emph{powers}.

Recall that by~\ref{uniform} our goal is to show that $\adim=\dim$ on $\Dsig$ if $\rr$ does not define $\N$.
It turns out to be convenient to work with another notion of dimension for collections of sets.
We define the \textbf{upper Minkowski dimension} of $\mathcal S\subseteq \mathcal P(X)$ by
$$
\udim \mathcal S=\varlimsup_{\substack{r\downarrow 0\\
S\in\mathcal S}}\frac{\log\net_r S}{-\log r},
$$
(with $\log(0)=-\infty$ and $\log(+\infty)=+\infty$),
that is,
$\udim \mathcal S$ is the supremum of all $\alpha\in [-\infty,+\infty]$ for which there is a sequence $(r_k)$ of positive real numbers and a sequence $(S_k)$ of elements of $\mathcal S$ such that $\lim_{k\to +\infty} r_k=0$ and
$\lim_{k\to +\infty}(\log\net_{r_k}S_k)/(-\log r_k)=\alpha$.
For $S\subseteq X$, we write $\udim S$ instead of $\udim\{S\}$.
There are many different names for, and equivalent formulations of, $\udim$ on sets in the literature; it is often defined only for nonempty totally bounded sets $S$.
(In~\cite{luu}, $\udim$ on sets is denoted by $\overline{\dim}_{\mathrm B}$ and is called the upper box-counting box dimension.)

\subsubsection*{Note}
Equivalent metrics yield the same $\udim$ (and $\adim$).
Indeed, if
$f\colon (0,\delta)\to \R$ and $0<a<b\in \R$ are such that
$af(r)\leq \net_r S\leq bf(r)$ for all $r\in (0,\delta)$, then
$$
\udim S=\varlimsup_{r\downarrow 0}(\log f(r))/(-\log r).
$$
These observations allow for switching between alternate definitions of $\udim$ (and $\adim$) as convenient; see~\cite{falconerbook}*{3.1} for more details.


We shall obtain Theorem~A from the following more technical result.

\begin{thmb}
If $\rr$ does not define $\N$ and
$E$ is $\Dsig$ and bounded\tx,
then
$$
\udim \set{E_x: x\in \R^m \And \dim E_x=d}\leq d,\quad d=0,\dots,n-m.
$$
\end{thmb}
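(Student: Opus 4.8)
The plan is to prove Theorem~B by induction on $n$; the statement is vacuous for $n=0$. Fix a bounded $\Dsig$ set $E\subseteq[-M,M]^n\subseteq\R^n$ and $m\le n$. Three cases arise: $d=n-m$ (trivial), $d=0$ (the heart of the matter), and $0<d<n-m$ (which I reduce to $d=0$ at the same $n$). The case $d=n-m$ needs only boundedness: since $E_x\subseteq[-M,M]^{n-m}$ we have $\net_r(E_x)\le(1+2M/r)^{n-m}$, so $\udim\set{E_x:\dim E_x=n-m}\le n-m$. For $d=0$ I first reduce to $n-m=1$: if $\dim E_x=0$ then $E_x$ is nonempty and $E_x\subseteq\prod_{i=1}^{n-m}\pi_i(E_x)$, where $\pi_i(E_x)=(\pi_iE)_x$ and $\pi_iE\subseteq\R^{m+1}$ is the projection of $E$ on coordinates $1,\dots,m,m+i$; each $\pi_i(E_x)$ then has $\dim0$ as well. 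A standard comparison of packing and covering numbers shows that $\udim$ is sublogarithmic, and with the same proof this holds for family versions, so $\udim\set{E_x:\dim E_x=0}$ is at most $\sum_i\udim\set{(\pi_iE)_w:\dim(\pi_iE)_w=0}$; when $n-m>1$ each $\pi_iE$ lives in $\R^{m+1}$ with $m+1<n$, and the inductive hypothesis gives the bound $0$.

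It remains to treat a bounded $\Dsig$ set $E\subseteq\R^{m+1}$ and show $\udim\set{E_x:x\in\R^m,\ \dim E_x=0}\le0$. If $\rr^\circ$ is o\nbd minimal this is immediate: every $\Dsig$ set is definable in $\rr^\circ$, a $\dim0$ fiber of a set definable in an o\nbd minimal structure is finite, and uniform finiteness provides a single $N\in\N$ with $\net_r(E_x)\le\card E_x\le N$ for all relevant $x$. So assume $\rr^\circ$ is not o\nbd minimal. By~\ref{omincorereduction} there is an infinite discrete closed definable $D\subseteq\R^{>0}$ (necessarily unbounded, since bounded discrete closed sets are finite), with successor function $\sigma$, and by~\ref{Dsigequiv} we may write $E=\bigcup_{r\in D}X_r$ with $(X_r)_{r\in D}$ an increasing definable family of compact sets; rescaling, $E\subseteq\I^{m+1}$. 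By Proposition~\ref{uniform}, every $\dim0$ fiber of $E$ is nowhere dense in $\I$. Suppose, toward a contradiction, that $\udim\set{E_x:\dim E_x=0}>\alpha>0$; then there are $\rho_k\downarrow0$ and $x_k\in\R^m$ with $\dim E_{x_k}=0$ and $\net_{\rho_k}(E_{x_k})>\rho_k^{-\alpha}$, and since $\net_\rho$ of an increasing union is the supremum of the $\net_\rho$ of its members, there are $r_k\in D$ with $\net_{\rho_k}((X_{r_k})_{x_k})>\rho_k^{-\alpha}$.

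The idea is to convert this into a definable failure of~\ref{uniform}(2), following the mechanism in the proof of~\ref{dimzerocase} and in~\cite{fhm}. For a compact nowhere dense $C\subseteq\I$, the map taking the midpoint of a complementary interval of $C$ to its left endpoint is definable in $(\rbar,C)$, has discrete domain, takes values in $C$, and alters $\net_\rho$ by at most a universal constant factor; so the metric complexity of $C$ is faithfully recorded by a discrete set of gap midpoints. Exploiting upper semicontinuity of $x\mapsto\net_\rho((X_r)_x)$ (so that $\set{x:\net_\rho((X_r)_x)\ge k}$ is closed and definable, hence admits a definable lexicographically least element), one selects, uniformly over suitable pairs drawn from $D$, a ``worst'' fiber and its gap midpoints, rescales each resulting discrete set affinely into a subinterval of $\I$, and arranges these subintervals to be pairwise separated and to accumulate only at $0$; the union $A$ of the rescaled sets is then a discrete definable subset of $\R$, and the rescaling is chosen so that the block built from $X_{r_k},x_k$ still carries $\net$ of order $\rho_k^{-\alpha}$ at its own scale, forcing $\adim A\ge\alpha>0$. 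By~\ref{falctrick} there are $N\in\N$ and an $\rbar$\nbd definable $F\colon\R^N\to\R$ with $F(A^N)$ dense; since $F$ is then definable in $\rr$, this contradicts~\ref{uniform}(2), hence~\ref{uniform}(1), contrary to hypothesis. This settles $d=0$, and with it the whole statement: for $0<d<n-m$ one reduces to $d=0$ at the same $n$ by partitioning the parameter space definably according to which $d$\nbd element coordinate set makes $\pi_I(E_x)$ have interior, using~\ref{uniform} and~\ref{fiberdim} to see that the $\Dsig$ locus $\set{(x,y):\dim(E_x)_y\ge1}$ is nowhere dense over a definably chosen box in $\pi_{[d]}(E_x)$ (else a projection of $E_x$ would contain a $(d{+}1)$\nbd dimensional box, against $\dim E_x=d$), peeling that part off and iterating downward on the dimension of the base (discarding at each stage the nowhere-dense discontinuity locus of the relevant fiber-selecting map, via~\ref{D(f)} and~\ref{uniform}) to present $E_x$, up to controlled error, as a graph-like set over a $d$\nbd box with $\dim0$ fibers, and finally covering that box by $O(r^{-d})$ cubes of side at most $r$ and bounding the $\net_r$ over each cube by that of the union of its $\dim0$ fibers over the cube, a fiber of an auxiliary $\Dsig$ set to which the $d=0$ case applies uniformly.

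The hard part is the uniform construction in the $d=0$ base case: one must merge the gap-midpoint data of an infinite family of distinct fibers, taken at an infinite family of scales, into a single discrete definable subset of $\R$ that still has positive Assouad dimension, carrying out the bookkeeping definably in the absence of general definable choice and with $\N$ unavailable as an index set. The chief technical nuisance is that the witnessing scales $\rho_k$ need not have the form $1/s$ for $s\in D$, so gaps in $D$ threaten the polynomial lower bound on $\net$ under any naive discretization of scale; this is handled by reading complexity off at the $D$\nbd intrinsic scales $1/\sigma(r)$ at which $X_r$ resolves a fiber, together with the affine rescaling into separated blocks, which makes the absolute size of $\rho_k$ irrelevant. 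Packing infinitely many such blocks into $\I$ while keeping the union discrete and the packing definable, indexed by $D$ rather than $\N$, takes care but is routine. The remaining ingredients—the case $d=n-m$, the reduction of $d=0$ to $n-m=1$, the peeling reduction for $0<d<n-m$, and the o\nbd minimal-core case—are comparatively mechanical.
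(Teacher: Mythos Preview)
Your proposal has two genuine gaps, one in each of the nontrivial regimes.

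\textbf{The case $0<d<n-m$.} Your reduction to $d=0$ via an ``auxiliary $\Dsig$ set'' whose fibers are unions $\bigcup_{y\in Q}(E_x)_y$ over small cubes $Q$ does not work: such unions need not have $\dim 0$ even when every individual $(E_x)_y$ does. For a continuous $g\colon\I\to\I$ and $E=\graph g$ (so $m=0$, $n=2$, $d=1$), each $(E)_y=\{g(y)\}$ has $\dim 0$, but $\bigcup_{y\in Q}(E)_y=g(Q)$ is an interval. More to the point, knowing $\udim\{(E_x)_y:\dim(E_x)_y=0\}\le 0$ uniformly in $(x,y)$ does not give $\net_r(E_x)=O(r^{-d})$: the graph of a highly oscillatory continuous function has single-point fibers yet $\udim$ close to $2$. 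This is exactly the obstruction the paper's machinery is built to handle. The paper proves, by a separate induction (statement (i)$_{n,m}$), that the family $\{E_x:\Int(E_x)=\emptyset\}$ is \emph{sparse}: for each $\e>0$ and large $k$, each $B_k(E_x)$ decomposes into a bounded number of $2^{k\e}$\nbd sparse pieces, uniformly in $x$. Sparsity is what lets one pass from the inductive bound on $\udim\{\pi E_x\}$ (for $\pi\in\Pi(n-m,n-m-1)$) to the desired bound on $\udim\{E_x\}$, via~\ref{deltasparsefacts}. Your ``peeling'' sketch does not produce anything playing this role, and the step ``bound $\net_r$ over each cube by that of the union of its $\dim 0$ fibers, to which the $d=0$ case applies'' fails as stated.

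\textbf{The case $d=0$, $n-m=1$, non-o\nbd minimal core.} Your plan is to assemble a single discrete definable $A\subseteq\R$ with $\adim A>0$ and then invoke~\ref{falctrick} and~\ref{uniform}(2). Two obstacles remain. First, definably selecting a ``worst'' fiber requires comparing values of $\net_\rho$, which is integer-valued and not first-order comparable without access to $\N$; this can be circumvented (\egg, compare Lebesgue measures of $\rho$\nbd thickenings), but you do not say so. Second, and more seriously, the scale-matching problem is real and your proposed fix does not resolve it: if $D$ has large gaps, there may be no $s\in D$ with $1/s$ comparable to $\rho_k$, and for the nearest $s_k\in D$ with $1/s_k\le\rho_k$ you obtain $\net_{1/s_k}(\text{block})\gtrsim\rho_k^{-\alpha}$ while $R/r=s_k$ can be arbitrarily large relative to $1/\rho_k$; no positive lower bound on $\adim A$ follows. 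The paper sidesteps all of this by applying~\ref{uniform}(3) directly to the $\Dsig$ set $\{(x,t):t\in Q((E_x)^4)\}$: one finite rectangular partition of $\I$ works simultaneously for all $x$ with $\dim E_x=0$, yielding a fixed finite list of projection directions along which the Falconer doubling $\udim\{T((E_x)^2)\}\ge 2\,\udim\{E_x\}$ can be run on the entire family at once. This uniform use of~\ref{uniform} is the missing idea.

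Your treatment of $d=n-m$, the reduction of $d=0$ to $n-m=1$ by projecting to single coordinates, and the o\nbd minimal-core case are fine and match the paper.
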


%
%
%
%
We begin to work toward the proof by listing some basic properties of $\udim$ in arbitrary metric spaces (proofs are easy and are left to the reader).

\begin{blank}\label{familyinequality}
$
\sup\set{\udim S: S\in\mathcal S}\leq \udim \mathcal S\leq \udim\bigcup\mathcal S
$.
\end{blank}



Recalling~\ref{familydim} for definitions, $\udim$ is increasing, stable, sublogarithmic, and logarithmic on cartesian \emph{powers}.
It is also closure invariant:

\begin{blank}$\udim\Cl\mathcal S=\udim\mathcal S$\tx, where
$\Cl\mathcal S:=\set{\Cl S: S\in \mathcal S}$.
\end{blank}

\begin{blank}
If $\mathcal A\subseteq \mathcal P(X^n)$\tx, then
$
\dim\mathcal A\leq \udim \mathcal A
$.
\end{blank}

%

%

If $Y$ is another metric space, then $\udim$ is
``Lipschitz nonexpansive'':

\begin{blank}
If $(f_S:S\to Y)_{S\in\mathcal S}$ is uniformly Lipschitz\tx, then $\udim\set{f_S(S): S\in\mathcal S}\leq \udim \mathcal S.$
\end{blank}

Hence, $\udim$ is also \textbf{Lipschitz invariant}:

\begin{blank}
If $(f_S:S\to Y)_{S\in\mathcal S}$ is uniformly bi-Lipschitz\tx, then $\udim\set{f_S(S): S\in\mathcal S}=\udim \mathcal S.$
\end{blank}


%

For $X=\R^n$, we have a special property.

\begin{blank}\label{ntrivial}
$\udim\mathcal A\leq n$ for every $\mathcal A\subseteq \mathcal P(\R^n)$ such that $\sup\set{\diam A:A\in\mathcal A}<+\infty$.
\end{blank}

When combined with~\ref{familyinequality},

\begin{blank}
If $E$ is bounded and some $E_x$ has interior \tx($x\in\R^m$\tx)\tx, then
$
\udim \set{E_x: x\in \R^m}=n-m.
$
\end{blank}

Hence,

\begin{blank}\label{dnminusm}
Theorem~B holds for $d=n-m$.
\end{blank}

Next is a uniform version of~\ref{dimzerocase}.

\begin{blank}\label{dzero}
Theorem~B holds for $d=0$.
\end{blank}

\begin{proof}
Assume that $\rr$ does not define $\N$ and $E$ is $\Dsig$ and bounded.
Let $A=\set{x\in \R^m: \dim E_x =0}$.
We must show that $\udim\set{E_x: x\in A}\leq 0$.

First, suppose that $n=m+1$.
By following the argument for~\ref{dimzerocase}, it suffices to exhibit a linear $T\colon \R^2\to\R$ such that
$
\udim\set{T((E_x)^2):x\in A}\geq 2\udim\set{E_x:x\in A}.
$
With $Q$ as in the proof of~\ref{falctrick}, observe that
$
\set{(x,t): x\in\R^{m}\And t\in Q((E_x)^4)}
$ is
$\Dsig$, and for every $x\in\R^m$,  $E_x$ has interior if and only if $Q((E_x)^4)$ has interior.
Hence, by~\ref{uniform}, there is a finite set $\mathcal I$ of intervals such that for every $x\in A$ there exists $I_x\in\mathcal I$ such that $Q((E_x)^4)$ is disjoint from $I_x$.
By arguing as in~\ref{falctrick} there is a finite collection
$\mathcal L$ of lines through the origin and positive real numbers $(c_\ell)_{\ell\in\mathcal L}$ such that, for each $x\in A$, there exists $\ell(x)\in \mathcal L$ such that the restriction  to $(E_x)^2$ of the orthogonal projection $\pi_{\ell(x)}$ onto $\ell(x)$ is bi-Lipshitz with lower constant $c_{\ell(x)}$.
For each such $\ell$, let $A_\ell$ be the set of $x\in A$ such that
$\pi_\ell\rest (E_x)^2$ is bi-Lipschitz with lower constant $c_\ell$.
Let $\theta_\ell$ be a rotation taking $\ell$ to the real line and put $T_\ell=\theta_\ell\circ \pi_\ell$.
As the family
$
(T_\ell\rest ((E_x)^2))_{x\in A_\ell}
$
is uniformly bi-Lipschitz, we have
$$
\udim\set{T_\ell((E_x)^2): x\in A_\ell}=\udim\set{(E_x)^2: x\in A_\ell}.
$$
By stability, for some $\ell\in\mathcal L$ we have
$$
\udim\set{(E_x)^2: x\in A}=\udim\set{(E_x)^2: x\in A_\ell}.
$$
Hence, for this $\ell$, we have
$$
\udim\set{T_\ell((E_x)^2): x\in A}\geq\udim\set{(E_x)^2: x\in A}.
$$
As $\udim$ is logarithmic on cartesian powers, we have
$$
\udim\set{T_\ell((E_x)^2): x\in A}\geq 2\udim\set{E_x: x\in \R^m}
$$
as desired.

With the case $n=m+1$ in hand, the general case follows by the uniform version of the last paragraph of the proof of~\ref{dimzerocase}.
\end{proof}

Hence,

\begin{blank}\label{dimzerothmb}
Theorem~B holds if either $n-m=1$ or
$\dim\set{E_x: x\in \R^m }\leq 0$.
\end{blank}

Again we have made it this far in axiomatic fashion, that is, via  easily-stated properties of $\udim$ rather than its definition; this now changes.
We shall finish the proof of Theorem~B by an induction, but we need to make the inductive assumptions rather more precise.
We require some notation and definitions (some of which are formulated specifically for this induction).

Let $\Pi(k,j)$ denote the collection of all coordinate projections $\R^k\to\R^j$, and $\Pi(k)=\bigcup_j\Pi(k,j)$.
Given $\pi\in\Pi(k,j)$ we let $\piperp\in \Pi(k,k-j)$ denote the orthogonal projection.
Observe that if $\pi\in \Pi(n,m)$ is the projection on the first $m$ coordinates and $x\in\R^m$, then $E_x=\piperp(E\cap \pi\inv(x))$.

Given a set $A$, its cardinality is denoted by $\card A$.

Put $B_k(E)=\set{v\in \Z^n: (v+\I^n)\cap 2^kE\neq \emptyset}$ and $N_k(E)=\card B_k(E)$.
Note that $N_k(E)$ is the number (allowing $+\infty$) of closed dyadic cubes in $\R^n$ of side length $2^{-k}$ that intersect~$E$, and that if $E\subseteq \I^n$, then $B_k(E)\subseteq \N^n$.
If $\mathcal S\subseteq \mathcal P(\R^n)$, then $B_k(\bigcup\mathcal S)=\bigcup_{S\in\mathcal S}B_k(S)$.
It is a standard fact that $\udim E=\varlimsup_{k\to+\infty}\log_2 N_k(E)/k$.
Hence,

\begin{blank}\label{boxcount}
If $\mathcal S\subseteq \mathcal P(\R^n)$\tx, then
$\udim\mathcal S\leq \alpha$ iff
$$
\forall \e>0 \exists k_\e\in\N \forall k\geq k_\e \forall S\in \mathcal S,\ N_k(S)\leq 2^{k(\alpha+\e)}.
$$
\end{blank}

The next batch of definitions is for tracking certain data that we need to make our induction go through.

For $\delta\in (0,\infty)$, we say that
$C\subseteq \N^n$ is \textbf{$\boldsymbol{\delta}$\nbd sparse} if there exists $\pi \in \Pi(n,n-1)$ such that $\card (\piperp(C\cap \pi\inv(u)))\leq \delta$ for all $u \in \N^{n-1}$.
(For $n=1$, this means just that $\card C\leq \delta$.)
If $\pi$ is the projection on the first $n-1$ coordinates, then this means that $\card(C_u)\leq \delta$ for all $u\in\N^{n-1}$.
An easy observation:

\begin{blank}\label{deltasparsefacts}
If $C\subseteq \N^n$ is $\delta$\nbd sparse\tx,
then there exists $\pi \in \Pi(n,n-1)$ such that
$\card C\leq \delta\card\pi C$.
%
%
\end{blank}

We need a uniform version for collections of sets. We say that
$\mathcal C\subseteq \mathcal P(\N^n)$ is \textbf{$\boldsymbol{\delta}$\nbd sparse} if there exists $\pi \in \Pi(n,n-1)$ such that for all $u \in \N^{n-1}$ and all $C\in\mathcal C$, we have $\card (\piperp(C\cap\pi\inv(u)))\leq \delta$.
The quantifier complexity of the next step is sufficiently complicated that we express it in logical notation.
We say that
$\mathcal S\subseteq \mathcal P(\I^n)$ is \textbf{sparse} if:
$$
\exists s\in \N\forall \e> 0 \exists k_\e \in \N \forall k>k_\e\exists
(C_{1,S})_{S\in\mathcal S}\dots (C_{s,S})_{S\in\mathcal S}
\forall S\in\mathcal S,\ B_k(S) = \bigcup_{i=1}^{s} C_{i,S}
$$
where each family $(C_{i,S})_{S\in\mathcal S}\subseteq \mathcal P(\N^n)$ is $2^{k\e}$\nbd sparse.
(We have suppressed any notation here for the dependence of the families $(C_{i,S})_{S\in\mathcal S}$ on $k$; we introduce it later as needed.)
We say that $s$ \textbf{witnesses} (or is a witness to) that $\mathcal S$ is sparse, and that $E$ is sparse if $E\subseteq \I^n$ and $\{E\}$ is
sparse.
Some easy observations (proofs are left to the reader):



\begin{blank}\label{sparsefacts}
Let $\mathcal S\subseteq \mathcal P(\I^n)$.
\begin{enumerate}
\item
If $\udim\mathcal S=0$\tx, then $\mathcal S$ is sparse and witnessed by $s=1$.
\item
If $\mathcal S$ is sparse with witness $s$ and $\mathcal S'\subseteq \mathcal P(\I^n)$ is sparse with witness $s'$\tx, then
$
\set{S\cup S': (S,S')\in \mathcal S\times \mathcal S' }
$
is sparse with witness $s+s'$.
\item\label{projectsparse}
If there exists $\pi\in \Pi(n)$
such that $\pi\mathcal S:=\set{\pi S: S\in\mathcal S}$ is sparse\tx,
then $\mathcal S$ is sparse with the same witness.
\item
If there exists $\pi\in \Pi(n)$
such that $\udim\pi\mathcal S=0$\tx,
then $\mathcal S$ is sparse and witnessed by $s=1$.
\end{enumerate}
\end{blank}

\begin{blank}\label{uniondefsparse}
If $E$ is the union of an increasing sequence $(S_r)_{r>0}$ of subsets of $\I^n$ and
$$
\set{S_{r,x}: r>0,\ x\in \I^m}
$$
is sparse\tx, then $\set{E_x: x\in \I^m}$ is sparse with the same witness.
\end{blank}


\begin{proof}
Let $s\in \N$ witness that $\set{S_{r,x}: r>0,\ x\in \I^m}$ is sparse.
Let $\e>0$.
We exhibit $k_\e$ such that for all $k>k_\e$ there are
$2^{k\e}$\nbd sparse families
$
(D_{k,1,x})_{x\in\I_m},\dots,(D_{k,s,x})_{x\in \I^m}
$
of subsets of $\N^{n-m}$
such that
$B_k(E_x)=\bigcup_{i=1}^s D_{k,i,x}$
for every $x\in \I^m$.
By assumption, there exists $k_\e \in \N$ such that for all $k>k_\e$ there are $2^{k\e}$\nbd sparse families
$$
(C_{k,1,r,x})_{(r,x)\in \R^{>0}\times \I^m},\dots,(C_{k,s,r,x})_{(r,x)\in \R^{>0}\times \I^m}
$$
of subsets of $\N^{n-m}$ such that
$
B_k(S_{r,x}) = \bigcup_{i=1}^s C_{k,i,r,x}
$
for every $(r,x)\in \R^{>0}\times \I^m$.
Let $k>k_\e$ and $x\in \I^m$.
Since $E_x\subseteq \I^{n-m}$, we have
$B_k(E_x)\subseteq \{0,\dots,2^k\}^{n-m}$.
Since $(S_r)$ is increasing in $r$, so is $(S_{r,x})$.
Thus, there exists $r(k,x)>0$ such that
$
B_k(E_x)=B_k(S_{r(k,x),x})
$.
Hence,
$$
B_k(E_x)=\bigcup_{i=1}^sC_{k,i,r(k,x),x}.
$$
Since each $(C_{k,i,r,x})_{(r,x)}$ is $2^{k\e}$\nbd sparse, so is each $(C_{k,i,r(k,x),x})_{x\in \I^m}$.
Put $D_{k,i,x}=C_{k,i,r(k,x),x}$.
Then
$
(D_{k,1,x})_{x\in\I^m},\dots,(D_{k,s,x})_{x\in \I^m}
$
are as desired.
\end{proof}

\subsection*{Proof of Theorem~B}
Let $E$ be $\Dsig$ and bounded.
We must show that
$$
\udim\set{E_x: x\in\R^m\And \dim E_x=d}\leq d.
$$
We have already established the cases $n-m=1$, $n-m=d$, and $d=0$.
By translation, dilation and Lipschitz invariance, we may take $E\subseteq \I^n$.
We now proceed by induction on $n-m\geq 2$ to establish
the following:
\begin{itemize}
\item[]
(i)$_{n,m}$
$\set{E_x: x\in \I^m\And \Int(E_x)=\emptyset}$ is sparse.
\item[]
(ii)$_{n,m}$
$\udim\set{E_x: x\in\I^m\And \dim E_x=d}\leq d$,\quad
$1\leq d\leq n-m-1$.
\end{itemize}

\subsubsection*{Remark}
Formally, we could start the induction with $n-m=1$, but
the key ideas will be better illustrated by doing the case $n-m=2$ explicitly.

\subsubsection*{Proof of \tx{(i)}$_{m+2,m}$}

Suppose that $n=m+2$.
Let $A=\set{x\in \I^m: \Int(E_x)=\emptyset}$.
We must show that $\set{E_x: x\in A}$ is sparse.
Write $E=\bigcup_{r>0}X_r$ as in~\ref{Dsigequiv}.
By~\ref{uniondefsparse}, it
suffices to show that the collection
$
\set{X_{r,x}:r>0,\ x\in A}
$
is sparse.
In order to improve readability, we first suppress the parameters and show that \tx{(i)}$_{2,0}$ holds, that is, if $E\subseteq \I^2$ ($\cong \I\times \I$) is definable, compact and has no interior, then $E$ is sparse.
The argument will be explicit enough to read off the needed uniformity in parameters.

Define $f\colon \I^2\to \R$ by
$f(x,y)=\min (E_x\cup \{2\})\cap [y,\infty)$.
For $y \in \I$, let $f(\plc,y)$ denote the function $x\to f(x,y)\colon  \I\to \R$.
Put
$
F=E\cup \set{ (x,y) \in \I^2: x \in \mathcal D(f(\plc,y))}.
$
For $y\in \R$, put $E^y=\set{x\in \R: (x,y)\in E}$ and $F^y=\set{x\in \R: (x,y)\in F}$.
For each $k$, put
\begin{align*}
C_{k,1}&=B_k(\set{(x,y)\in\I^2: \Int(E_x)\neq \emptyset})\\
C_{k,2}&=B_k(\set{(x,y)\in\I^2: \Int(E^y)\neq \emptyset})\\
C_{k,3}&=\set{(u,v)\in B_k(E): u\in B_k(F^{v/2^k})}\setminus C_{k,2}\\
C_{k,4}&=B_k(E)\setminus (C_{k,1}\cup C_{k,2}\cup C_{k,3})
\end{align*}
As $B_k(E)$ is the union of the $C_{k,i}$, it suffices to
let $\e>0$ and show that, for each $i\in \{1,2,3,4\}$,
$C_{k,i}$ is $2^{k\e}$\nbd sparse for all sufficiently large $k$.

We first dispose of the $C_{k,1}$ and $C_{k,2}$.
As $E$ is $\fsig$ and has no interior, the same is true of
$
\set{x\in \I: \Int (E_x)\neq \emptyset}
$; as this set is $\Dsig$ (\ref{fiberdim}) and contained in $\R$, it has  $\udim$ at most $0$ by~\ref{dimzerothmb}.
Thus, for sufficiently large $k$, the projection of $C_{k,1}$ on the first coordinate has cardinality at most $2^{k\e}$; for such $k$, the set $C_{k,1}$ is $2^{k\e}$\nbd sparse (by definition).
The $C_{k,2}$ are similarly handled via projection on the second coordinate.

\begin{figure}[b]
\begin{tikzpicture}[scale=0.75, transform shape,
    thick,
    >=stealth',
    dot/.style = {
      draw,
      fill = white,
      circle,
      inner sep = 0pt,
      minimum size = 4pt
    }
]

\definecolor{red}{gray}{0.1}
\definecolor{green}{gray}{0.3}
\definecolor{blue}{gray}{0.4}
\definecolor{yellow}{gray}{0.6}
\definecolor{violet}{gray}{0.5}
\definecolor{orange}{gray}{0.3}

    \draw[very thin,color=gray] (0,0) grid (6.1,6.1);
    \draw[->] (0,0) -- (6.2,0) node[right] {$x$};
    \draw[->] (0,0) -- (0,6.2) node[above] {$y$};
   \draw[green, line width=2pt]      (1.3,0) -- (1.3,6);

  \draw[red] plot[smooth] coordinates {(0,2.5) (0.12,2.6) (0.6,2.4) (1,2.8)  (1.4,2.6) (1.7,2.8) (2,2.75) (2.1,2.2) (2.2,1.6) (2.31,1)(2.4,.85) (2.8,.87) (3,.86) (3.25,.80) (3.5,.92)};
  \draw[red] plot[smooth] coordinates {(0.4,0) (0.5,0.15) (0.6,0.2) (0.8,0.4) (0.9,0.38) (1,.5)  (1.12,.7) (1.5,.6) (2,.65) (2.2,0.2) (2.3,0)};
  \draw[red] plot[smooth] coordinates {(0.6,4.5) (0.8,4.35) (0.9,4.4) (1,4.2)(1.12,4.05) (1.6,4.1) (2,4.3) (2.3,4.2) (2.8,4.5) (3,4.7) (3.1,4.65) (3.2,4.4) (3.3,4.2) (3.5,4) (3.6,3.9) (3.8,3.85) (4,3.88) (4.1, 3.9) (4.2,4.2) (4.3,4.6) (4.4, 4.8) (4.5,5) (4.6,5.3) (4.7,5.8) (4.8,6)};
  \draw[red] plot[smooth] coordinates {(1.45,6) (1.5,5.4) (1.6,5.3) (1.8,5.5) (2,5.4) (2.31,5.35)};
  \draw[red] plot[smooth] coordinates {(2.31,5) (2.4,5.2) (2.8,5.87) (3,5.8)  (3.2,5.6) (3.5,5.85) (3.7,5.8) (4,5.6) (4.1,5.5) (4.3,5.45) (4.5,5.35)};
  \draw[red] plot[smooth] coordinates {(3.5,0) (3.6,.1) (3.7,.15) (4,.05) (4.2,.2) (4.4,.5) (4.6,.3) (4.8,.4) (5,.45) (5.2,.6) (5.4,.5) (5.6,.6) (5.8,.7) (6,.75)};
  \draw[red] plot[smooth] coordinates { (4.25,1) (4.3,1.2) (4.5,1.4) (4.7,1.3) (5,1.35) (5.3,1.3) (5.6,1.4) (5.7,1.2) (6,1.1)};

   \draw[thin,pattern=north west lines, pattern color=blue,opacity=.2]   (0,0) rectangle (1,1);
   \filldraw[thin,orange,opacity=.2](0,2) rectangle (1,3);
  \draw[thin,pattern=north west lines, pattern color=blue,opacity=.2] (0,4) rectangle (1,5);
      \filldraw[thin,orange,opacity=.2] (1,0) rectangle (2,1);
  \filldraw[thin,orange,opacity=.2] (1,2) rectangle (2,3);
     \filldraw[thin,orange,opacity=.2](1,4) rectangle (3,5);
  \draw[thin,pattern=north west lines, pattern color=blue,opacity=.2](1,5) rectangle (2,6);
  \draw[thin,pattern=north west lines, pattern color=blue,opacity=.2] (2,0) rectangle (3,1);
  \draw[thin,pattern=north west lines, pattern color=blue,opacity=.2](2,1) rectangle (3,3);
  \draw[thin,pattern=north west lines, pattern color=blue,opacity=.2] (2,5) rectangle (3,6);
  \filldraw[thin,orange,opacity=.2](3,5) rectangle (4,6);
 \draw[thin,pattern=north west lines, pattern color=blue,opacity=.2](3,4) rectangle (4,5);
 \draw[thin,pattern=north west lines, pattern color=blue,opacity=.2] (3,0) rectangle (4,1);
 \draw[thin,pattern=north west lines, pattern color=blue,opacity=.2] (3,3) rectangle (4,4);
  \filldraw[thin,orange,opacity=.2] (4,0) rectangle (5,1);
 \draw[thin,pattern=north west lines, pattern color=blue,opacity=.2] (4,3) rectangle (5,6);
 \draw[thin,pattern=north west lines, pattern color=blue,opacity=.2] (4,1) rectangle (5,2);
  \filldraw[thin,orange,opacity=.2] (5,1) rectangle (6,2);
  \filldraw[thin,orange,opacity=.2] (5,0) rectangle (6,1);
   \end{tikzpicture}
\caption{\ }
\end{figure}

(Referring now to Figure~1:
The wavy lines indicate graphs of the various $f(\plc,n/2^k)$; the striped boxes are those with bottom left corner in
$2^{-k}C_{k,3}$; and the shaded boxes are those with bottom left corner
in $2^{-k}C_{k,4}$.
Note that while each nonempty $E_x$ must intersect every shaded box, it need not intersect every striped box; this observation is key in the next stage of the proof.)

Now we deal with the $C_{k,3}$.
By compactness of $E\cup (\I\times\{2\})$, each $f(\plc,y)$ is lower semicontinuous.
It is routine real analysis that for each $\eta>0$ the set
$$E\cup \set{ (x,y) \in \I^2: x \in \mathcal D(f(\plc,y),\eta)}$$
is compact---and so $F$ is $\Dsig$ (recall the argument for~\ref{D(f)})---and for each $y\in \I$,
$F^y$ has interior if and only if $E^y$ has interior.
If $y\in \I$ and $E^y$ has no interior, then
$
F^y\setminus \Int(F^y)=E^y\cup \mathcal D(f(\plc,y))
$,
and so
$
\udim\set{F^y: y\in \I\And \Int(E^y)=\emptyset}=0
$
by applying~\ref{dimzerothmb} to $\set{(y,x): (x,y)\in F}$.
By~\ref{sparsefacts},
$
\set{F^y: y\in \I\And \Int(E^y)=\emptyset}
$
is sparse (with witness~$1$).
Hence, $C_{k,3}$ is $2^{k\e}$\nbd sparse for all sufficiently large $k$, because
$\card\set{u\in\N: (u,v)\in C_{k,3} }\leq \card N_k(F^{v/2^k})$
for each $v\in\N$.

Finally, we deal with the $C_{k,4}$.
By~\ref{dzero}, we have
$
\udim\set{E_x: \Int(E_x)=\emptyset}=0.
$
Hence, for all $x\in \I$ and sufficiently large $k$, if $E_x$ has no interior, then $N_k(E_x)\leq 2^{k\e}$;
we show that $C_{k,4}$ is $2^{k\e}$\nbd sparse for such $k$ by letting $(u,v)\in C_{k,4}$ and showing that $(C_{k,4})_u\subseteq B_k(E_{u/2^k})$.
As $C_{k,4}$ is disjoint from $C_{k,3}$, the function
$
x\mapsto f(x,v/2^k):[u/2^k,(u+1)/2^k]\to\R
$
is continuous.
Moreover, its range is contained in $[v/2^k,(v+1)/2^k]$, for if not, then $u\in B_k(E^{(v+1)/2^k})$ by the Intermediate Value Theorem, contradicting the definition of $C_{k,4}$.
Hence, $v\in B_{k}(E_{u/2^k})$ (as desired).

We have finished the proof of the case $m=0$ and $n=2$.
For arbitrary $m$, recall the compact sets $X_{r,x}$ at the beginning of the proof.
Put
$
f(r,x,a,b)=\min(X_{r,x,a}\cup\{2\})\cap [b,\infty)
$
for $(a,b)\in\I^2$.
Make the obvious modifications to the definitions of the $C_{i,k}$, and then proceed mutatis mutandis.
(This ends the proof of \tx{(i)}$_{m+2,m}$.)

\subsubsection*{Proof of \tx{(ii)}$_{m+2,m}$}
Suppose that $n=m+2$.
Put
$
A(1)=\set{x\in \R^m: \dim E_x=1}.
$
We must show that $\udim \set{E_x: x\in A(1)}\leq 1$.
Suppose to the contrary that 
there exist $\e>0$ and a sequence $(x_k)$ in $A(1)$ such that
\begin{equation}
\forall k,\ N_k(E_{x_k}) > 2^{k(1+\e)}.
\end{equation}
For each $\pi\in \Pi(2,1)$, we have
$\udim\set{\pi (E_x): x\in\R^m}\leq 1$.
Hence, after throwing away finitely many $x_k$, we reduce to the case that also
\begin{equation}
\forall k\forall \pi\in \Pi(2,1),\ N_k(\pi(E_{x_k}))<2^{k(1+\e/3)}.
\end{equation}
By $\text{(i)}_{m+2,m}$, $\set{E_x: x\in A(1)}$ is sparse.
Thus, there exist $s, j\in \N$ such that $B_j(E_{x_j})$ is a union of $s$\nbd many $2^{j\e/2}$\nbd sparse sets and $
2^{j(1+\e/2)}\geq s 2^{j(1+\e/3)}
$.
By~\ref{deltasparsefacts} and~(1), there exists $\pi \in \Pi(2,1)$ such that
$
N_j(\pi(E_{x_j})) \geq  N_j(E_{x_j})/s2^{j\e/2}>2^{j(1+\e/3)},
$
contradicting~(2).
(This ends the proof of \tx{(ii)}$_{m+2,m}$.)

\subsubsection*{Inductive assumption}
Let $n-m>2$.
Assume that
$\text{(i)}_{n',m'}$ and $\text{(ii)}_{n',m'}$
hold for all
$n',m'\in \N$ such that $0\leq n'-m'<n-m$.

\subsubsection*{Proof of \tx{(i)}$_{n,m}$}

This is quite similar to the proof of \tx{(i)}$_{m+2,m}$, so we only hint at the needed modifications.
As before, we reduce to the case that $m=0$, that is, we show that if $E$ is compact and has no interior, then $E$ is sparse (and explicitly so).
Define $f$ as before, but with $x$ ranging over $\I^{n-1}$, and define
%
%
the associated set $F$ in the obvious way.
Let $\pi$ denote projection on the first $n-1$ coordinates.
Consider the following sets and collections, where $E^y$ and $F^y$ are  defined similarly as before:
\begin{enumerate}
\item
$\set{(x,y)\in E: x\notin \Int\pi E}$
\item
$\set{(x,y)\in E: x\in \Int\pi E\And \Int(E_x)\neq \emptyset}$
\item
$\set{(x,y)\in E: \Int(E^y)\neq \emptyset}$
\item
$\set{E_x: x\in \I^{n-1}\And \Int(E_x)=\emptyset}$
\item
$\set{F^y: y\in \I\And \Int(E^y)=\emptyset}$
\end{enumerate}
Since $\pi E\setminus \Int\pi E$ is $\Dsig$ and has no interior, it is sparse by~\tx{(i)}$_{n-1,0}$; then set~(1) is also sparse, because
$
\pi\set{(x,y)\in E: x\notin \Int\pi E}=\pi E\setminus \Int\pi E.
$
Similarly, sets~(2) and~(3) are sparse because the sets
$$
\set{x\in \Int\pi E:\Int(E_x)\neq \emptyset}\quad
\set{y\in\I: \Int (E^y)\neq \emptyset}
$$
are $\Dsig$ and have no interior.
By~\tx{(i)}$_{n,n-1}$, collection~(4) is sparse.
By~\tx{(i)}$_{n,1}$ (applied to $\set{(y,x): (x,y)\in F}$),  collection~(5) is sparse.
Now proceed mutatis mutandis as for~\tx{(i)}$_{2,0}$.

\subsubsection*{Proof of \tx{(ii)}$_{n,m}$}
Let $d\in \{1,\dots,n-m-1\}$ and
$
A(d)=\set{x\in \R^m: \dim E_x=d}.
$
We must show that $\udim\set{E_x: x\in A(d)}\leq d$.
If $d=n-m-1$, then the argument is essentially the same as for \tx{(ii)}$_{m+2,m}$.
If $d<n-m-1$, then we apply \tx{(ii)}$_{n-1,m}$ to $\pi E$ for each $\pi\in \Pi(n-m,n-m-1)$, then finish as for \tx{(ii)}$_{m+2,m}$ via the obvious modifications.

This ends the proof of Theorem~B. \qed

\subsection*{Proof of Theorem A}
Let $E$ be $\Dsig$.
We must show that $\tdim E=\dim E=\adim E$.
By~\ref{uniform}, it suffices to show that $\adim E=\dim E$.
For $(x,t)\in \R^n\times \R$, put
$$
\phi(x,t)=(x_1(1+x_1^2)^{-1/2},\dots,
x_n(1+x_n^2)^{-1/2},t(1+t^2)^{-1/2}).
$$
The set
$$
F:=\set{(\phi(x,t),(y-x)/t): t>0;\ x,y\in E;\ \abs{x-y}\leq t}
$$
is $\Dsig$ and bounded.
For each $(x,t)\in E\times \R^{>0}$, we have
$$F_{\phi(x,t)}=[-1,1]^n\cap (E-\{x\})/t.$$
Thus, $\dim F_{\phi(x,t)}=\dim E$, so by Theorem~B,
$$
\dim E=\udim\set{[-1,1]^n\cap (E-\{x\})/t: (x,t)\in E\times \R^{>0}}.
$$
By translation invariance,
$$
\dim E=\udim\set{(E\cap \bar B(x,t))/t: (x,t)\in E\times \R^{>0}}
$$
where $\bar B(x,t)$ is the closed cube of side length $2t$ centered at $x$.
Let $\e>0$.
We show that $\adim E\leq \e+\dim E$.
There exists $\rho>0$ such that for all $x\in E$, $t>0$ and $0<s<\rho$, we have
$
\net_{s}(E\cap \bar B(x,t))/t)\leq s^{-(\e+\dim E)}.
$
Put $C=\max(1,\net_\rho \bar B(0,1))$.
Let $x\in E$ and $0<r<R$.
If $r/R<\rho$, then
$$
\net_r(E\cap \bar B(x,R))=\net_{r/R}((E\cap \bar B(x,R))/R)\leq (R/r)^{\e+\dim E}\leq C(R/r)^{\e+\dim E}.
$$
If $r/R\geq \rho$, then
\begin{multline*}
\net_r(E\cap \bar B(x,R))\leq \net_{\rho R}(E\cap \bar B(x,R))\\
\leq \net_{\rho R}\bar B(x,R)=\net_\rho \bar B(0,1)\leq C(R/r)^{\e+\dim E}.
\end{multline*}
Hence,
$\adim E\leq \e+\dim E$, as desired. \qed

\section{Concluding remarks}\label{S:conc}

We have already noted in the introduction that Theorem~A applies beyond the easy case that $\rr^\circ$ is o\nbd minimal.
Here is another class of examples not yet described in the literature.

\begin{blank}\label{infrank}
There is a Cantor set $E\subseteq \R$ such that\tx,
if $\rr$ is o\nbd minimal and exponentially bounded\tx, then
$\tdim=\adim=\dim$ on all sets
definable in the expansion of $\rr$ by all subsets of $E^m$ \tx($m$ ranging over $\N$\tx) that have countable closure.
\end{blank}

(See any of~\cites{geocat,fkms,tameness} for the definition of ``exponentially bounded''. Note that every Cantor subset of $\R$ contains countable closed sets of arbitrary countable Cantor-Bendixson rank.)

\begin{proof}
Let $E$ be as in the proof of~\cite{fkms}*{Theorem~B}.
By~\ref{intorcntbl}, it suffices to let $X\subseteq E$ be countable and closed, and show that every bounded unary set definable in $(\rr,X)^{\#}$ has interior or is countable, where $(\rr,X)^{\#}$ denotes the expansion of $\rr$ by all subsets of each finite cartesian power of $X$.
By~\cite{fkms}*{1.11}, it suffices to let $f\colon [0,1]^n \to \R$
be bounded and definable in $\rr$, and show that $f(X^n)$ has   countable closure.
By compactness of $X^n$, it suffices to let $x\in X^n$ and find $\delta>0$ such that $f(X^n \cap B(x,\delta))$ has countable closure.
The result now follows from~\cite{fkms}*{1.8}, assertion~(ii)
in the proof of~\cite{fkms}*{Theorem~B}, and the countability and compactness of $X^n$.
\end{proof}

\begin{dblank}
Consideration of $\set{k!: k\in\N}$ shows that the dimensional coincidence $\tdim=\dim=\adim$ can hold for sets
that define $\N$ over $\rbar$.
\end{dblank}


\begin{dblank}
In~\ref{fsigintro}, ``$\fsig$''  cannot be relaxed to ``boolean combination of $\fsig$'': By~\cite{densepairs}*{Theorem 1}, if $E$ is $\fsig$ and a real-closed proper subfield of $\R$ (say, if $E$ is the set of all real algebraic numbers),
then every set definable in $(\rbar,E)$ is a boolean combination of $\fsig$ sets. But both $E$ and $\R\setminus E$ are dense in~$\R$.
\end{dblank}

\begin{dblank}
The conclusion of Theorem~A does not generally extend to definable metric spaces (that is, metric spaces of the form $(E,d)$ where $d\colon E^2\to \R^{\geq 0}$ is definable).
For example, $(x,y)\mapsto \abs{x-y}^{2/3}$ is a metric on $\I$, and it is easy to see that $\hdim\I=3/2$ with respect to this metric.
\end{dblank}

\subsection*{Open issues and further directions}
While we regard Theorem~A as a major advance in the tameness program over $\rbar$, there are several questions left open.
Perhaps the most important of these: If $\rr$ does not define $\N$, does $\tdim=\adim=\dim$ on all sets definable in the open core of $\rr$? To put this another way, if $\rr$ is an expansion of $\rbar$ by constructible sets and it does not define $\N$, does $\tdim=\adim=\dim$ on all definable sets?
By~\ref{tdimtdimclthm}, this is the same as asking whether $\tdim=\tdimcl$ on all definable sets, but we do not yet know if $\dim=\dimcl$ on all definable sets (a weaker condition, on the face of it). Indeed, we do not yet know what to say about the dimensions of complements of $\Dsig$ sets
(but we are working on it).

Are there notions of dimension that satisfy all of the properties listed in~\ref{adimfactsintro} (at least, on compact sets) but are not bounded above by $\adim$?

What can be said about $\rr$ under the assumption $\tdim\leq \dim$ on all definable sets? As mentioned earlier, there are $\gdelta$ sets in $\R^3$ with $\tdim>\dim$ (\cite{engelking}*{1.10.23}), so $\N$ is not definable (and thus Theorem~A applies). But beyond this?

There is a notion dual to $\adim$ in the literature; see Fraser~\cite{fraser} for information.
We have not considered it here as it allows for nonempty open subsets of $\R$ to have dimension zero (and it is decreasing, not increasing) but perhaps there are still some associated tameness results.

We have omitted from this paper consideration of applications of Theorems~A and~B beyond those to the tameness program given in the introduction, in particular, we forego describing in detail obvious strengthenings of already-known results (see~\cite{tameness} for a few).
But there are plenty of open questions to be pursued.
Just one example: Suppose that $E$ is closed and definable and $p\in \N$.
Is there a definable $C^p$ function $f\colon \R^n\to\R$ such that $E=f\inv(0)$?
If $\rr$ defines $\N$, then yes, via a result of H.~Whitney (indeed, $f$ can then be taken to be $C^\infty$ and real-analytic off $E$).
If $\rr$ is o\nbd minimal, then again, yes (\cite{geocat}*{4.22}); see also Miller and Thamrongthanyalak~\cite{cpzero}.
But in general?
What if moreover $\dim=\dimcl$ on all definable sets?
Or if $\tdim=\tdimcl$ on all definable sets?
(And so on.)

In order to increase the accessibility of this paper, we have omitted any consideration of model-theoretic (as opposed to just definability-theoretic) tameness.
Model theorists will understand that there are numerous questions in this direction to pursue.

%
%
%
%

\bibsection
\begin{biblist}[\small]


\bib{dms1}{article}{
   author={Dolich, A.},
   author={Miller, C.},
   author={Steinhorn, C.},
   title={Structures having o-minimal open core},
   journal={Trans. Amer. Math. Soc.},
   volume={362},
   date={2010},
   number={3},
   pages={1371--1411},
   issn={0002-9947},
   review={\MR{2563733 (2011g:03086)}},
}

\bib{dms2}{article}{
   author={Dolich, A.},
   author={Miller, C.},
   author={Steinhorn, C.},
   title={Extensions of ordered theories by generic predicates},
   journal={J. Symbolic Logic},
   volume={78},
   date={2013},
   number={2},
   pages={369--387},
   issn={0022-4812},
   review={\MR{3145186}},
}

\bib{dms3}{article}{
   author={Dolich, A.},
   author={Miller, C.},
   author={Steinhorn, C.},
   title={Expansions of o-minimal structures by dense independent sets},
   journal={Ann. Pure Appl. Logic},
   volume={167},
   date={2016},
   number={8},
   pages={684--706},
   review={\MR{3498376}},
}

\bib{DoM}{article}{
   author={Dougherty, R.},
   author={Miller, C.},
   title={Definable Boolean combinations of open sets are Boolean
   combinations of open definable sets},
   journal={Illinois J. Math.},
   volume={45},
   date={2001},
   number={4},
   pages={1347--1350},
   issn={0019-2082},
   review={\MR{1895461 (2003c:54018)}},
}

\bib{twoz}{article}{
   author={van den Dries, L.},
   title={The field of reals with a predicate for the powers of two},
   journal={Manuscripta Math.},
   volume={54},
   date={1985},
   number={1-2},
   pages={187--195},
   issn={0025-2611},
   review={\MR{808687 (87d:03098)}},
}

\bib{densepairs}{article}{
     author={van den Dries, L.},
   title={Dense pairs of o-minimal structures},
   journal={Fund. Math.},
   volume={157},
   date={1998},
   number={1},
   pages={61--78},
   issn={0016-2736},
   review={\MR{1623615 (2000a:03058)}},
}


\bib{geocat}{article}{
   author={van den Dries, L.},
   author={Miller, C.},
   title={Geometric categories and o-minimal structures},
   journal={Duke Math. J.},
   volume={84},
   date={1996},
   number={2},
   pages={497--540},
   issn={0012-7094},
   review={\MR{1404337 (97i:32008)}},
}

\bib{engelking}{book}{
   author={Engelking, R.},
   title={Theory of dimensions finite and infinite},
   series={Sigma Series in Pure Mathematics},
   volume={10},
   publisher={Heldermann Verlag, Lemgo},
   date={1995},
   pages={viii+401},
   isbn={3-88538-010-2},
   review={\MR{1363947 (97j:54033)}},
}

\bib{falconerbook}{book}{
   author={Falconer, K.},
   title={Fractal geometry},
   edition={2},
   note={Mathematical foundations and applications},
   publisher={John Wiley \& Sons Inc.},
   place={Hoboken, NJ},
   date={2003},
   pages={xxviii+337},
   isbn={0-470-84861-8},
   review={\MR{2118797 (2006b:28001)}},
}



\bib{lstrat}{article}{
   author={Fischer, A.},
   title={o-minimal $\Lambda^m$-regular stratification},
   journal={Ann. Pure Appl. Logic},
   volume={147},
   date={2007},
   number={1-2},
   pages={101--112},
   issn={0168-0072},
   review={\MR{2328201 (2009b:03098)}},
}



\bib{fhm}{article}{
   author={Fornasiero, A.},
   author={Hieronymi, P.},
   author={Miller, C.},
   journal={Proc. Amer. Math. Soc.},
    title={A dichotomy for expansions of the real field},
    volume={141},
    number={2},
   date={2013},
   pages={697--698},
   review={\MR{2996974}},
}

\bib{fhw}{article}{
   author={Fornasiero, A.},
   author={Hieronymi, P.},
   author={Walsberg, E.},
    title={How to avoid a compact set},
   date={2016},
   status={preprint},
   eprint={http://arxiv.org/abs/1612.00785}
}

\bib{fraser}{article}{
   author={Fraser, J.},
   journal={Trans. Amer. Math. Soc.},
    title={Assouad type dimensions and homogeneity of fractals},
   volume={366},
   date={2014},
   number={12},
   pages={6687--6733},
   issn={0002-9947},
   review={\MR{3267023}},
    }

\bib{fkms}{article}{
   author={Friedman, H.},
   author={Kurdyka, K.},
   author={Miller, C.},
   author={Speissegger, P.},
   title={Expansions of the real field by open sets: definability versus interpretability},
   journal={J. Symbolic Logic},
   volume={75},
   number={4},
   date={2010},
   pages={1311--1325},
   review={\MR{2767970 (2012a:03100)}},
   }

\bib{ominsparse}{article}{
   author={Friedman, H.},
   author={Miller, C.},
   title={Expansions of o-minimal structures by sparse sets},
   journal={Fund. Math.},
   volume={167},
   date={2001},
   number={1},
   pages={55--64},
   issn={0016-2736},
   review={\MR{1816817 (2001m:03075)}},
}

\bib{fast}{article}{
   author={Friedman, H.},
   author={Miller, C.},
   title={Expansions of o-minimal structures by fast sequences},
   journal={J. Symbolic Logic},
   volume={70},
   date={2005},
   number={2},
   pages={410--418},
   issn={0022-4812},
   review={\MR{2140038 (2006a:03053)}},
}


\bib{GH}{article}{
author={G\"{u}nayd{\i}n, A.},
    author={Hieronymi, P.},
   title={The real field with the rational points of an elliptic curve},
   journal={Fund. Math.},
   volume={211},
   date={2011},
   number={1},
   pages={15--40},
   issn={0016-2736},
   review={\MR{2741211 (2012b:03086)}},
}



 \bib{hier2}{article}{
    author={Hieronymi, P.},
   title={Expansions of subfields of the real field by a discrete set},
   journal={Fund. Math.},
    volume={215},
   number={2},
   date={2011},
   pages={167--175},
   review={\MR{2860183}},
 }

\bib{tamecantor}{article}{
    author={Hieronymi, P.},
   title={A tame Cantor set},
   status={to appear},
   eprint={http://arxiv.org/abs/1507.03201},
   journal={J. Euro. Math. Soc. (JEMS)},
}



\bib{kechris}{book}{
   author={Kechris, A.},
   title={Classical descriptive set theory},
   series={Graduate Texts in Mathematics},
   volume={156},
   publisher={Springer-Verlag},
   place={New York},
   date={1995},
   pages={xviii+402},
   isbn={0-387-94374-9},
   review={\MR{1321597 (96e:03057)}},
}

\bib{kp}{article}{
   author={Kurdyka, K.},
   author={Parusi{\'n}ski, A.},
   title={Quasi-convex decomposition in o-minimal structures. Application to
   the gradient conjecture},
   conference={
      title={Singularity theory and its applications},
   },
   book={
      series={Adv. Stud. Pure Math.},
      volume={43},
      publisher={Math. Soc. Japan},
      place={Tokyo},
   },
   date={2006},
   pages={137--177},
   review={\MR{2325137 (2008e:32009)}},
}
	
\bib{luu}{article}{
   author={Luukkainen, J.},
   title={Assouad dimension: antifractal metrization, porous sets, and
   homogeneous measures},
   journal={J. Korean Math. Soc.},
   volume={35},
   date={1998},
   number={1},
   pages={23--76},
   issn={0304-9914},
   review={\MR{1608518 (99m:54023)}},
}


\bib{marker}{book}{
   author={Marker, D.},
   title={Model theory},
   series={Graduate Texts in Mathematics},
   volume={217},
   note={An introduction},
   publisher={Springer-Verlag, New York},
   date={2002},
   pages={viii+342},
   isbn={0-387-98760-6},
   review={\MR{1924282 (2003e:03060)}},
}

\bib{mattila}{book}{
   author={Mattila, P.},
   title={Geometry of sets and measures in Euclidean spaces},
   series={Cambridge Studies in Advanced Mathematics},
   volume={44},
   note={Fractals and rectifiability},
   publisher={Cambridge University Press},
   place={Cambridge},
   date={1995},
   pages={xii+343},
   isbn={0-521-46576-1},
   isbn={0-521-65595-1},
   review={\MR{1333890 (96h:28006)}},
}


\bib{tameness}{article}{
   author={Miller, C.},
   title={Tameness in expansions of the real field},
   conference={
      title={Logic Colloquium '01},
   },
   book={
      series={Lect. Notes Log.},
      volume={20},
      publisher={Assoc. Symbol. Logic},
      place={Urbana, IL},
   },
   date={2005},
   pages={281--316},
   review={\MR{2143901 (2006j:03049)}},
}


\bib{opencore}{article}{
   author={Miller, C.},
   author={Speissegger, P.},
   title={Expansions of the real line by open sets: o-minimality and open
   cores},
   journal={Fund. Math.},
   volume={162},
   date={1999},
   number={3},
   pages={193--208},
   issn={0016-2736},
   review={\MR{1736360 (2001a:03083)}},
}
\bib{cpzero}{article}{
   author={Miller, C.},
   author={Thamrongthanyalak, A.},
   title={D-minimal expansions of the real field have the $C^p$ zero set property},
   date={2017},
   status={preprint},
   eprint={https://people.math.osu.edu/miller.1987/cpdminpre.pdf}

}

\bib{itseq}{article}{
   author={Miller, C.},
   author={Tyne, J.},
   title={Expansions of o-minimal structures by iteration sequences},
   journal={Notre Dame J. Formal Logic},
   volume={47},
   date={2006},
   number={1},
   pages={93--99},
   issn={0029-4527},
   review={\MR{2211185 (2006m:03065)}},
}

\bib{MR2767108}{book}{
   author={Robinson, J.},
   title={Dimensions, embeddings, and attractors},
   series={Cambridge Tracts in Mathematics},
   volume={186},
   publisher={Cambridge University Press, Cambridge},
   date={2011},
   pages={xii+205},
   isbn={978-0-521-89805-8},
   review={\MR{2767108 (2012i:37035)}},
}

\bib{solovay}{article}{
   author={Solovay, R.},
   title={A model of set theory in which every set is Lebesgue measurable},
   journal={Ann. of Math. (2)},
   volume={92},
   date={1970},
   pages={1--56},
   issn={0003-486X},
   review={\MR{0265151 (42 \#64)}},
}

\end{biblist}
\end{document}